\documentclass[12pt]{article}
\usepackage[latin1]{inputenc}
\usepackage{amsmath}
\usepackage{amsfonts}
\usepackage{amssymb}
\usepackage{color}
\usepackage{graphicx}
\setlength{\oddsidemargin}{0pt}
\setlength{\textwidth}{470pt}
\setlength{\marginparsep}{0pt} \setlength{\marginparwidth}{60pt}
\setlength{\topmargin}{20pt} \setlength{\headheight}{0pt}
\setlength{\headsep}{0pt} \setlength{\textheight}{650pt}
\setlength{\footskip}{20pt}
\newtheorem{theorem}{Theorem}[section]

\newcommand{\qed}{\hspace{\stretch{3}}$\square$\\[1.8ex]}

\newtheorem{lemma}[theorem]{Lemma}
\newtheorem{cor}[theorem]{Corollary}

\newtheorem{teo}[theorem]{Theorem}
\newtheorem{remark}[theorem]{Remark}

\newtheorem{definition}[theorem]{Definition}

\def\R{\mathbb{R}}

\def\R{\mathbb{R}}

\title{\bf  The backbone decomposition for spatially dependent supercritical superprocesses}
\author{{\sc A.E. Kyprianou\footnote{Department of Mathematical Sciences, University of Bath,
Claverton Down, Bath, BA2 7AY, U.K.} \ \
J-L. P\'erez\footnote{Department of Statistics, ITAM, Rio Hondo 1,
Tizapan 1 San Angel, 01000 M\'exico, D.F.} \ \
  Y.-X.
Ren\footnote{LMAM School of Mathematical Sciences $\&$ Center for Statistical Science,
Peking University, Beijing 100871,
 P. R. China.}
}
}

\begin{document}

\maketitle
\begin{abstract}  Consider any supercritical Galton-Watson process which may become extinct with positive probability. It is a well-understood and intuitively obvious phenomenon that,
 on the survival  set, the  process may be pathwise decomposed into a stochastically `thinner' Galton-Watson process, which almost surely survives and which is decorated with immigrants, at every time step, initiating independent copies of the original Galton-Watson process conditioned to become extinct. The thinner process is known as the {\it  backbone} and characterizes the  genealogical lines of  descent of prolific individuals in the original process. Here, prolific means individuals who  have  at least one descendant in every subsequent generation to their own.

Starting with Evans and O'Connell \cite{EO}, there exists a cluster of literature, \cite{EP, SV, DW, BKMS, KR},  describing  the analogue of this decomposition (the so-called {\it backbone decomposition}) for a variety of different classes of superprocesses and continuous-state branching processes. Note that the latter family of stochastic processes may be seen as the total mass process of superprocesses with non-spatially dependent branching mechanism.

In this article we consolidate  the aforementioned collection of results concerning backbone decompositions     and describe a  result for a general class of supercritical superprocesses  with spatially dependent branching mechanisms. Our approach exposes the commonality and robustness of many of the existing arguments in the literature.

\bigskip

\noindent {\sc Key words and phrases}: Superprocesses, $\mathbb{N}$-measure, backbone decomposition.

\bigskip

\noindent MSC 2000 subject classifications: 60J80, 60E10.

\end{abstract}
\section{Superprocesses and Markov branching processes}

This paper concerns a fundamental decomposition which can be found amongst a  general family of  superprocesses and has, to date, been identified for a number of specific sub-families thereof by a variety of different authors. We therefore start by briefly describing   the general family of superprocesses that we shall concern ourselves with. The reader is referred to the many, and now classical, works of Dynkin for further details of what we present below; see for example \cite{D0, D1, D3, D4, D2}. The books of Le Gall \cite{LeG}, Etheridge \cite{Etheridge} and Li \cite{ZL} also serve as an excellent point of reference.

Let $E$ be a domain of $\mathbb{R}^d$.
Following the setting of Fitzsimmons \cite{fitz}, we are interested in strong Markov processes, $X = \{X_t : t\geq 0\}$ which are valued in $\mathcal{M}_F(E)$, the space of finite measures with support in $E$. The evolution of $X$ depends on two quantities $\mathcal{P}$ and $\psi$. Here, $\mathcal{P}=\{\mathcal{P}_t:t\geq0\}$ is the semi-group of a   diffusion on $E$  {\color{black}(the diffusion  killed  upon leaving $E$)}, and $\psi$ is a so-called branching mechanism which, by assumption, takes the form
\begin{equation}\label{bm}
\psi(x,\lambda)=-\alpha(x)\lambda +\beta(x)\lambda^2+\int_{(0,\infty)}(e^{-\lambda z}-1+\lambda z)\pi(x,{\rm d}z),
\end{equation}
where $\alpha$ and $\beta\geq 0$ 
are  bounded measurable mappings from $ E$ to $\mathbb{R}$ and $[0,\infty)$ respectively and for each $x\in E$, $\pi(x, {\rm d}z)$ is a measure concentrated  on  $(0,\infty)$ such that $x\to\int_{(0,\infty)}(z\wedge z^2)\pi(x,{\rm d}z)$ is bounded and measurable. For technical reasons, we shall additionally assume that the diffusion associated to $\mathcal{P}$ satisfies certain conditions. These conditions are lifted from Section II.1.1. (Assumptions 1.1A and 1.1B) on pages 1218-1219 of \cite{D1}\footnote{The assumptions on $\mathcal{P}$ may in principle be relaxed. The main reason for this imposition here comes in the proof of Lemma \ref{limit-Delta-Z} where a comparison principle is used for diffusions.}. They state that $\mathcal{P}$ has associated infinitesimal generator
\[
L= \sum_{i,j}a_{i,j}\frac{\partial^2}{\partial x_i\partial x_j} + \sum_i b_i \frac{\partial}{\partial x_i},
\]
where the coefficients $a_{i,j}$ and $b_j$ are space dependent coefficients satisfying:

\medskip

\noindent{\bf (Uniform Elliptically)} There exists a constant $\gamma>0$ such that
\[
\sum_{i,l}a_{i,j}u_i u_j\geq\gamma\sum_i u_i^2
\]
for all $x\in E$ and $u_1, \cdots u_d\in \mathbb{R}.$

\medskip

\noindent{\bf (H\"older continuity)} The coefficients $a_{i,j}$ and $b_i$ are uniformly bounded and H\"older continuous in such way  that there exist a constants $C>0$ and $\alpha\in(0,1]$ with
\[
|a_{i,j}(x) - a_{i,j}(y)|,\quad |b_i(x) -b_i(y)|\leq C |x-y|^\alpha
\]
 for all $x,y\in E$.
 Throughout, we shall refer to  $X$ as the $(\mathcal{P}, \psi)$-superprocess.

For each $\mu\in\mathcal{M}_F({ E})$ we denote by $\mathbb{P}_\mu$ the law of $X$ when issued from initial state $X_0 = \mu$. The semi-group of $X$, which in particular characterizes the laws $\{\mathbb{P}_\mu: \mu\in\mathcal{M}_F({ E})\}$, can be described as follows.
 For each $\mu\in\mathcal{M}_F({ E})$ and all $f\in {\rm bp}({ E})$, the space of non-negative, bounded measurable functions on ${ E}$,
\begin{equation}
\mathbb{ E}_\mu(e^{-\langle f, X_t\rangle})=\exp\left\{-\int_{{ E}}u_f(x,t)\mu({\rm d} x)\right\} \qquad t\geq 0,
\label{sg-fixed-time}
\end{equation}
where $u_f(x,t)$ is the unique non-negative solution to the equation
\begin{equation}
\label{u_f}
u_f(x,t)=\mathcal{P}_t[f](x)-\int^{t}_0{\rm d}s\cdot  \mathcal{P}_s[\psi(\cdot, u_f(\cdot, t-s))](x)\qquad x\in{ E}, t\geq 0.
\end{equation}
See for example Theorem 1.1 on pages 1208-1209 of \cite{D1} or Proposition 2.3 of \cite{fitz}.
Here we have used the standard inner product notation,
\[
\langle f , \mu\rangle =  \int_{{ E} }f(x)\mu({\rm d}x),
\]
for   $\mu\in\mathcal{M}_F({ E})$ and any $f$ such that the integral makes sense.

Suppose that we define $\mathcal{E} = \{\langle 1, X_t\rangle =  0\mbox{ for some }t>0\}$, the event of {\it  extinction}.
For each $x\in{ E}$ write
\begin{equation}
w(x)=-\log \mathbb{P}_{\delta_x}({\cal E}).
\label{w-def}
\end{equation}
It follows from \eqref{sg-fixed-time} that
\begin{equation}
\mathbb{ E}_\mu(e^{-\theta\langle 1, X_t\rangle})=\exp\left\{-\int_{{ E}}u_\theta(x,t)\mu({\rm d} x)\right\} \qquad t\geq 0,
\end{equation}
Note that $u_\theta(t,x)$ is increasing in $\theta$ and that $\mathbb{P}_\mu(\langle 1, X_t\rangle=0)$ is monotone increasing. Using these facts and letting $\theta\to\infty$, then $t\to\infty$,  we get that
\begin{equation}\label{extinct-t}
\mathbb{P}_\mu({\cal E}) =\lim_{t\to\infty}\mathbb{P}_\mu(\langle 1, X_t\rangle=0)=\exp\left\{-\int_{{ E}}\lim_{t\to\infty}\lim_{\theta\to\infty}u_{\theta}(x,t)\mu({\rm d} x)\right\}.
\end{equation}
 By choosing $\mu = \delta_x$, with $x\in { E}$, we see that
\begin{equation}\label{assumeforeextinguishing}
\mathbb{P}_\mu({\cal E})=\exp\left\{-\int_{{ E}}w(x)\mu({\rm d} x)\right\}.
\end{equation}

For the special case that $\psi$ does not depend on $x$ and $\mathcal{P}$ is conservative,  $\langle 1, X_t\rangle$ is a continuous state branching process. If $\psi(\lambda)$ satisfy the following condition:
$$\int^\infty\frac{1}{\psi(\lambda)}{\rm d}\lambda<\infty,$$
then $P_\mu$ almost surely we have ${\cal E}=\{\lim_{t\to\infty}\langle 1, X_t\rangle=0\}$, that is to say the event of {\it extinction} is equivalent to the event of {\it extinguishing}, see \cite{BKMS} and \cite{KR} for examples.

By first conditioning the event $\mathcal{ E}$ on $\mathcal{F}_t : =\sigma\{X_s : s\leq t\}$, we find that for all $t\geq 0$,
\[
\mathbb{ E}_\mu (e^{-\langle w, X_t\rangle}) = e^{-\langle w, \mu\rangle}.
\]
The function $w$ will play an important role in the forthcoming analysis and henceforth we shall assume that it respects the following property.

\bigskip

{\bf (A):} $w$ is locally bounded away from $0$ and $\infty$. 
\bigskip

The pathwise evolution of superprocesses is somewhat difficult to visualise on account of their realisations at each fixed time being sampled from the space of finite measures. However a related class of stochastic processes which exhibit similar mathematical properties to superprocesses and whose paths are much easier to visualise is that of  Markov branching processes. A Markov branching process $Z = \{Z_t: t\geq 0\}$  takes values in the space  $\mathcal{M}_a({ E})$ of finite atomic measures in ${ E}$ taking the form $\sum_{i=1}^n\delta_{x_i}$, where $n\in\mathbb{N}\cup\{0\}$ and $x_1,\cdots,x_n\in{ E}$. To describe its evolution we need to specify two quantities, $(\mathcal{P}, F)$, where, as before, $\mathcal{P}$ is  the semi-group of a diffusion on ${ E}$ and $F$ is the so-called branching generator which takes the form
\begin{equation}
F(x, s) =  q(x)\sum_{n\geq 0}p_n(x)(s^n - s), \qquad x\in{ E}, s\in[0,1],
\label{branch-gen}
\end{equation}
where $q$ is a bounded measurable mapping from ${ E}$ to $[0,\infty)$ and, the measurable  sequences $\{p_n(x): n\geq 0\}$, $x\in E$, are probability 
distributions. For each $\nu\in\mathcal{M}_a({ E})$, we denote by ${\rm P}_\nu$ the law of $Z$ when issued from initial state $Z_0 = \nu$. The probability ${\rm P}_\nu$ can be constructed in a pathwise sense as follows. From each point in the support of $\nu$ we issue an independent copy of
the diffusion
with semi-group $\mathcal{P}$. Independently of one another, for $(x,t)\in{ E}\times[0,\infty)$, each of these particles will be  killed at rate $q(x){\rm d}t$  to be replaced at their space-time point of death by $n\geq 0$ particles with probability $p_n(x)$. Relative to their point of creation, new particles behave independently to one another, as well as to existing particles, and undergo the same life cycle in law as their parents.

By conditioning on the first split time in the above description of a $(\mathcal{P}, F)$-Markov branching process, it is also possible to show that
\[
{\rm E}_\nu(e^{-\langle f, Z_t\rangle})=\exp\left\{-\int_{{ E}}v_f(x,t)\nu({\rm d} x)\right\} \qquad t\geq 0,
\]
where $v_f(x,t)$ solves 
\begin{equation}
\label{v_f}
e^{-v_f(x,t)}=\mathcal{P}[e^{-f}](x)+\int^{t}_0{\rm d}s\cdot  \mathcal{P}_s[F(\cdot, e^{-v_f(\cdot, t-s)})](x)\qquad x\in{ E}, t\geq 0.
\end{equation}
Moreover, it is known, cf.  Theorem 1.1 on pages 1208-1209 of \cite{D1}, that the solution to this equation is unique.
This shows a similar characterisation of the semi-groups of Markov branching processes to those of superprocesses.

The close similarities between the two processes become clearer when one takes account of the fact that the existence of superprocesses can be justified through a high density scaling  procedure of Markov branching processes. Roughly speaking, for a fixed triplet, $\mu,\mathcal{P},\psi$, one may construct a sequence of Markov branching processes, say $\{Z^{(n)}:n\geq 1\}$, such that the $n$-th element of the sequence is issued with an initial configuration of points which is taken to be an independent Poisson random measure with intensity $n\mu$ and branching generator $F_n$ satisfying
\[
F_n(x,s) = \frac{1}{n}[\psi(x,n(1-s)) + \alpha(x)n(1-s)],\qquad x\in{ E}, s\in[0,1].
\]
It is not immediately obvious that the right-hand side above conforms to the required structure of branching generators as stipulated in (\ref{branch-gen}), however this can be shown to be the case; see for example the discussion on p.93 of \cite{ZL}. It is now a straightforward exercise to show that for all $f\in {\rm bp}({ E})$ and $t\geq 0$ the law of $\langle f,n^{-1}Z^{(n)}_t \rangle$ converges weakly to the law of $\langle f, X_t\rangle$, where the measure $X_t$ satisfies (\ref{sg-fixed-time}). A little more work shows the convergence of the sequence of processes $\{n^{-1}Z^{(n)}: n\geq 1\}$ in an appropriate sense to a $(\mathcal{P}, \psi)$-superprocess issued from an initial state $\mu$.

Rather than going into the details of this scaling limit, we focus instead in this paper on another connection between superprocesses and branching processes which explains their many similarities without the need to refer to  a scaling limit.  The basic idea is that, under suitable assumptions, for a given $(\mathcal{P}, \psi)$-superprocess, there exists a related Markov branching process, $Z$, with computable characteristics such that at each fixed $t\geq 0$, the law of $Z_t$ may be coupled to the law of $X_t$ in such a way that, given $X_t$, $Z_t$ has the law of a Poisson random measure with intensity $w(x)X_t({\rm d}x)$, where $w$ is given by (\ref{w-def}). The study of so-called {\it backbone decompositions} pertains to how the
aforementioned Poisson embedding
  may be implemented in a pathwise sense at the level of processes.

The remainder of this paper is structured as follows. In the next section we briefly review the  sense and settings in which backbone decompositions have been  previously studied. Section \ref{prelim} looks at some preliminary results needed to address the general backbone decomposition that we deal with in Sections \ref{localbb}, \ref{5} and \ref{globalbb}.

\section{A brief history of backbones}

The basic idea of a backbone decomposition can be traced back to the setting of Galton-Watson trees with ideas coming from Harris and Sevast'yanov; cf Harris \cite{Harris}. Within any supercritical Galton-Watson process with a single initial ancestor  for which the probability of survival is not equal  to 0 or  1, one may identify prolific genealogical lines of descent on the event of survival. That is to say, infinite sequences of descendants which have the property that every individual has at least one descendant in every subsequent generation beyond its own. Together, these prolific genealogical lines of descent make a Galton-Watson tree which is thinner than the original tree.  One may describe the original Galton-Watson process in terms of  this thinner Galton-Watson process, which we now refer to as a  {\it backbone}, as follows. Let $0<p<1$ be the probability of survival. Consider a branching process which, with probability $1-p$, is an independent copy of the original Galton-Watson process conditioned to become extinct and, with probability $p$, is a copy of the backbone process, having the additional feature that every individual in the backbone process immigrates an additional random number of offspring, each of which initiate independent copies of the original Galton-Watson process conditioned to become extinct. With an appropriate choice of immigration numbers,  the resulting object has the same law as the original Galton-Watson process.

In Evans and O'Connell \cite{EO}, and later in Engl\"ander and Pinsky \cite{EP}, a new decomposition of a supercritical superprocess with quadratic branching mechanism was introduced in which one may write the distribution of the superprocess at time $t\geq0$ as the result of summing two independent processes together. The first is a copy of the original process conditioned
on extinction.
The second process is understood as the superposition of mass from independent copies of the original process conditioned on extinction which have immigrated `continuously' along the path of an auxiliary dyadic branching particle diffusion which starts with a random number of initial ancestors whose cardinality and spatial position is governed by an independent Poisson point process. The embedded branching particle system is known as the {\it backbone} (as opposed to the {\it spine} or {\it immortal particle} which appears in another related decomposition,   introduced in Roelly-Coppoletta and Rouault \cite{RR} and Evans  \cite{E}). In both \cite{EO} and \cite{EP} the decomposition is seen through the semi-group evolution equations which drive the process semi-group. However no pathwise construction is offered.

A pathwise backbone decomposition appears in Salisbury and Verzani \cite{SV}, who consider the case of conditioning a super-Brownian motion as it exits  a given  domain such that the exit measure contains at least $n$ pre-specified points in its support. There it was found that the conditioned process has the same law as the superposition of mass that immigrates in a Poissonian way along the spatial path of a branching particle motion which exits the domain with precisely $n$ particles at the pre-specified points. Another pathwise backbone decomposition for branching particle systems is given in Etheridge and Williams \cite{EW}, which is used in combination with a limiting procedure to prove another version of Evan's immortal particle picture.

In Duquesne and Winkel \cite{DW} a version of the Evans-O'Connell  backbone  decomposition was established for more general branching mechanisms, albeit without taking  account of spatial motion. In their case, quadratic branching is replaced by a general branching mechanism $\psi$ which is the Laplace exponent of a spectrally positive L\'evy process and which satisfies the conditions
 $0<-\psi'(0+)<\infty$ and
 $
\int^\infty   1/\psi(\xi)  {\rm d}\xi <\infty.
$
Moreover, the decomposition is offered in the pathwise sense and is described through the growth of genealogical trees embedded within the underling continuous state branching process.
The backbone is a continuous-time Galton Watson process and the general nature of the branching mechanism induces three different kinds of immigration. Firstly there is continuous immigration which  is described by a Poisson point process of independent processes along the trajectory of the backbone where the rate of immigration is given by a so-called excursion measure which assigns zero initial mass,  and finite life length of the  immigrating processes. A second Poisson point process  along the backbone describes the immigration of independent processes where the rate of immigration is given by the law of the original process conditioned on extinguishing and with a positive initial  volume of mass randomised by an infinite measure. This accounts for so-called discontinuous immigration. Finally, at the times of branching of the backbone, independent copies of the original process conditioned on extinguishing are immigrated with randomly distributed initial mass which depends on the number of offspring at the branch point. The last two forms of immigration do not occur when the branching mechanism is purely quadratic.

Concurrently to the work of \cite{DW} and within the  class of branching mechanisms corresponding to spectrally positive L\'evy processes with paths of
 unbounded variation (also allowing for the case that $-\psi'(0+)=\infty$), Bertoin et al. \cite{Betal} identify the aforementioned backbone
 as characterizing prolific genealogies within  the underling continuous state branching process.

 Berestycki et al. \cite{BKMS} extend the results of \cite{EO} and \cite{DW}, showing that for  superprocesses with relatively general motion and non-spatial branching mechanism corresponding to spectrally positive L\'evy  processes with finite mean, a pathwise backbone decomposition arises. The role of the backbone is played by a branching particle diffusion with the same motion operator as the superprocesses and, like Salisbury and Verzani
\cite{SV}, additional mass immigrates along the trajectory of the backbone in a Poissonian way.  Finally Kyprianou and Ren \cite{KR} look at the case of a continuous-state branching process with immigration for which a similar backbone decomposition to \cite{BKMS} can be shown.

As alluded to in the abstract, our objective in this article is to provide a general backbone decomposition which overlaps  with many of the cases listed above and, in particular, exposes the general effect on the backbone of spatially dependent branching. It is also our intention to demonstrate the robustness of some of the arguments that have been used in earlier work on  backbone decompositions. Specifically we are referring to the original manipulations associated with the semi-group equations given in Evans and O'Connell \cite{EO} and Engl\"ander and Pinsky \cite{EP}, as well as the use the Dynkin-Kuznetsov excursion measure, as found in Salisbury and Verzani \cite{SV}, Berestyki et al. \cite{BKMS} and Kyprianou and Ren \cite{KR}, to describe the rate of immigration along the backbone.

\section{Preliminaries}\label{prelim}

Before stating and proving the backbone decomposition, it will first be necessary to describe a number of mathematical structures which will play an important role.

\subsection{Localisation}

Suppose that the stochastic process $\xi = \{\xi_t : t\geq 0\}$ on $ E\cup\{\dagger\}$, where $\dagger$ is its cemetery state, is the
diffusion in $E$ corresponding to the semi-group $\mathcal{P}$. We shall denote its  probabilities by $\{\Pi_x : x\in { E}\}$. In the next definition, we shall take ${\rm bp}(E\times[0,t] )$ to be the space of non-negative, bounded measurable functions on $ E\times[0,t]$ with the additional property that the value of $f(x,s)$ on $ E\times[0,t]$ is independent of $s$,  and it is implicitly understood that  for all functions $f\in{\rm bp}( E\times[0,t] )$, we extend their spatial domain to include $\{\dagger\}$ and set $f(\{\dagger, s\}) = 0$.

\begin{definition}\rm \label{Laplace-1} For any open, bounded set {\color{black} $D\subset\subset E$}, and $t\geq 0$,
there exists a random measure $\widetilde X^D_t$  supported on
the boundary of $D\times[0,t)$ such that,
for all $f\in {\rm bp}({\color{black} E}\times[0,t] )$
and $\mu \in {\cal M}_F(D)$, the space of finite measures on $D$,
\begin{equation}\label{Laplace-D}-\log \mathbb{ E}_{\mu}\left(e^{-\langle f, \widetilde X^{D}_t\rangle}\right)=\int_{ E}\widetilde u^D_f(x, t)\mu({\rm d}x),\quad \quad t\ge 0,\end{equation}
where $\widetilde u_f^D(x,t)$ is the unique non-negative solution to the integral equation
\begin{equation}\label{int-D}\widetilde u^D_f(x,t)=\Pi_x[f(\xi_{t\wedge\tau^D}, t\wedge\tau^D)]-\Pi_x\left[\int^{t\wedge\tau^D}_0\psi(\xi_s, \widetilde u^D_f(\xi_s, t-s)){\rm d}s\right],
\end{equation}
and $\tau^D=\inf\{t\ge 0, \xi_t\in D^c\}$. Note that, here, we use the obvious notation that
$\langle f, \widetilde X^D_t\rangle  = \int_{\partial(D\times [0,t))} f(x,s) \widetilde X_t^D({\rm d}x, {\rm d}s)$.
Moreover, with a slight abuse of notation, since their effective spatial domain is
restricted to $D\cup\{\dagger\}$
in the above equation, we treat $\psi$ and $\widetilde{u}^D_f$ as functions in ${\rm bp}({ E}\times[0,t])$ and accordingly it is clear how to handle a spatial argument equal to $\dagger$, as before.
In the language of Dynkin \cite{D4}, $\widetilde X^D_t$ is called an exit measure.

 Now we define a random measure $ X^D_t$ on $D$ such that  $\langle  f,  X^D_t\rangle=\langle f, \widetilde X^{D}_t\rangle$ for any $f\in {\rm bp}(D)$,
 the space of non-negative, bounded measurable functions on $D$, where, henceforth, as is appropriate, we regard $f$ as a function defined on
  ${\color{black}{ E}\times [0,\infty)}$ in the sense that
 \begin{equation}
 f(x,t)= \left\{\begin{array}{ll}f(x),&\, x\in D\\
 0,&\, x\in E\setminus D.\end{array}\right.
 \label{lid}
 \end{equation} Then for any $f\in{\rm bp}(D)$ and $\mu \in {\cal M}_F(D)$,
\begin{equation}\label{Laplace-D-r}-\log \mathbb{ E}_{\mu}\left(e^{-\langle f,  X^{D}_t\rangle}\right)=\int_{ E}u^D_f(x, t)\mu({\rm d}x),\quad \quad t\ge 0,\end{equation}
where $u_f^D(x,t)$ is the unique non-negative solution to the integral equation
\begin{equation}\label{int-D'}u^D_f(x,t)=\Pi_x[f(\xi_{t}); t<\tau^D]-\Pi_x\left[\int^{t\wedge\tau^D}_0\psi(\xi_s, u^D_f(\xi_s,  t-s)){\rm d}s\right],
\quad x\in D.\end{equation}
\end{definition}

As a process in time, $\widetilde X^D = \{\widetilde X_t^D: t\geq 0\}$ is a superprocess with branching mechanism
$\psi(x,\lambda)\mathbf{1}_D(x)$, but whose associated semi-group is replaced by that of the process $\xi$
absorbed on $\partial D$.
Similarly, as a process in time, $X^D = \{X_t^D: t\geq 0\}$ is a superprocess with branching mechanism
$\psi(x,\lambda)\mathbf{1}_D(x)$, but whose associated semi-group is replaced by that of the process $\xi$
killed upon leaving  $D$.
 One may  think of $X^D_t$ as describing the mass at time $t$ in $X$ which {\it historically} avoids exiting the domain $D$. Note moreover that  for any two open bounded domains, $D_1\subset\subset D_2\subset\subset E $,  the processes $\widetilde X^{D_1}$ and $\widetilde X^{D_2}$ (and hence $X^{D_1}$ and $X^{D_2}$) are consistent in the sense that
 \begin{equation}
 \widetilde X^{D_1}_t= (\widetilde{\widetilde X_t^{D_2}})^{D_1},
 \label{consistent}
 \end{equation}
  for all $t\geq 0$ (and similarly $X_t^{D_1} = (X_t^{D_2})^{D_1}$ for all $t\geq 0$).

\subsection{Conditioning on
extinction}

In the spirit of the relationship between (\ref{Laplace-D}) and  (\ref{int-D}), we have that $w$ is the unique solution to
\begin{equation}\label{int-w-D}
w(x)=\Pi_x[w(\xi_{t\wedge \tau^D})]-\Pi_x\left[\int_0^{t\wedge \tau^D}\psi(\xi_s,w(\xi_s)){\rm d}s\right], \qquad x\in{D}.
\end{equation}
for all open domains {\color{black}$D\subset\subset E$}. {\color{black} Again, with a slight abuse of notation, we treat $w$ with its spatial domain
$ E\cup\{\dagger\}$
as a function in ${\rm bp}({ E}\times [0,t])$ and  $w(\dagger) : =0$}.
From Lemma 1.5 in \cite{D1} we may  transform (\ref{int-w-D}) to the equation
\[
w(x) = \Pi_x \left[ w(\xi_{t\wedge\tau_D})\exp\left\{ - \int_0^{t\wedge\tau_D}\frac{\psi(\xi_s,w(\xi_s))}{w(\xi_s)}{\rm d}s\right\}\right],\qquad x\in{D},
\]
which shows that for all open bounded domains $D$,
\begin{equation}
w(\xi_{t\wedge \tau^D}) \exp\left\{ - \int_0^{t\wedge\tau^D}\frac{\psi(\xi_s,w(\xi_s))}{w(\xi_s)}{\rm d}s\right\}, \qquad t\geq 0,
\label{localmg}
\end{equation}
is a martingale.

The function $w$ can be used to locally describe the law of the superprocess when conditioned on {\it global}
{\it extinction} (as opposed to extinction on the sub-domain $D$).
The following lemma outlines  standard theory.

\begin{lemma}\label{lemma2} Suppose that $\mu\in \mathcal{M}_F(E)$ satisfies $\langle w,\mu\rangle<\infty$ (so, for example, it suffices that $\mu$ is compactly supported). Define
$$\mathbb{P}^*_\mu(\cdot)=\mathbb{P_\mu}(\cdot|{\cal E}).$$
Then
for any $f\in {\rm bp}{\color{black}( { E}\times[0,t])} $ with the additional property that the value of {\color{black} $f(x,s)$ on $ E\times[0,t]$} is independent of $s$ and $\mu \in {\cal M}_F(D)$,
$$-\log \mathbb{ E}^*_{\mu}\left(e^{-\langle f, \widetilde X^D_t\rangle}\right)=\int_{{D}}\widetilde u^{D,*}_f(x,t)\mu({\rm d}x),$$
where $\widetilde{u}^{D,*}_f(x,t)=\widetilde u^D_{f+w}(x,t)-w(x)$ and
it is the unique solution of
\begin{equation}\label{int-D*}\widetilde u^{D,*}_f(x,t)=\Pi_x[f(\xi_{t\wedge\tau^D})]-\Pi_x\left[\int^{t\wedge\tau_D}_0\psi^*(\xi_s, \widetilde u^{D,*}_f(\xi_s, t-s)){\rm d}s\right],\quad x\in D,\end{equation}
where $\psi^{*}(x,\lambda)=\psi(x,\lambda+w(x))-\psi(x, w(x))$, restricted to $D$, is a branching mechanism of the kind described in the introduction and for each $\mu\in\mathcal{M}_F({ E})$, $(\widetilde X, \mathbb{P}^*_\mu)$ is a superprocess. Specifically, on $ E$,
\begin{equation}
\psi^*(x,\lambda) = -\alpha^*(x)\lambda +\beta(x)\lambda^2+\int_{(0,\infty)}(e^{-\lambda z}-1+\lambda z)\pi^*(x,{\rm d}z),
\label{1}
\end{equation}
where
\[
\alpha^*(x) =\alpha(x) - 2\beta(x)w(x) - \int_{(0,\infty)} (1-e^{-w(x)z})z\pi(x, {\rm d}z)
\]
and
\[
\pi^*(x, {\rm d}z) = e^{-w(x)z}\pi(x,{\rm d}z)\,\,\text{ on }{ E}\times(0,\infty).
\]

\end{lemma}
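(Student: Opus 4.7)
The plan is to obtain the statement as a Doob $h$-transform of the exit measure semigroup, with $w$ playing the role of the harmonic function, and then to read off the structure of $\psi^*$ from an algebraic identity. The main computation rests on a single observation: the strong Markov property of the superprocess applied at the exit measure shows that
$$\mathbb{E}_\mu\bigl(\mathbf{1}_{\mathcal E}\,\big|\,\sigma(\widetilde X^D_s:s\le t)\bigr)=\exp\bigl(-\langle w,\widetilde X^D_t\rangle\bigr),$$
where on the boundary part of $\partial(D\times[0,t))$ the value of $w$ at a space–time point $(x,s)$ is interpreted via the time-homogeneous relation (\ref{assumeforeextinguishing}) (so on the lateral boundary one uses $w(x)$, and on $D\times\{t\}$ one uses $w(x)$ as well, the two being governed by the martingale structure noted after (\ref{int-w-D})). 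Granted this, the first step is to write
$$\mathbb{E}^*_\mu\!\left(e^{-\langle f,\widetilde X^D_t\rangle}\right)=\frac{\mathbb{E}_\mu\!\left(e^{-\langle f+w,\widetilde X^D_t\rangle}\right)}{\mathbb{P}_\mu(\mathcal E)}=\exp\!\left(\langle w,\mu\rangle-\int_E\widetilde u^D_{f+w}(x,t)\,\mu({\rm d}x)\right),$$
using (\ref{Laplace-D}) in the numerator and (\ref{assumeforeextinguishing}) in the denominator. This identifies $\widetilde u^{D,*}_f=\widetilde u^D_{f+w}-w$.

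The next step is to derive the integral equation (\ref{int-D*}). I would substitute $\widetilde u^D_{f+w}=\widetilde u^{D,*}_f+w$ into (\ref{int-D}) written for the boundary datum $f+w$, then subtract the corresponding equation (\ref{int-w-D}) for $w$. The terms $\Pi_x[w(\xi_{t\wedge\tau^D})]$ cancel against $w(x)$ on the left and against the driving term, leaving
$$\widetilde u^{D,*}_f(x,t)=\Pi_x[f(\xi_{t\wedge\tau^D})]-\Pi_x\!\left[\int_0^{t\wedge\tau^D}\!\!\bigl(\psi(\xi_s,\widetilde u^{D,*}_f(\xi_s,t-s)+w(\xi_s))-\psi(\xi_s,w(\xi_s))\bigr){\rm d}s\right],$$
which, by the definition $\psi^*(x,\lambda)=\psi(x,\lambda+w(x))-\psi(x,w(x))$, is exactly (\ref{int-D*}). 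Uniqueness of the non-negative solution then transfers from that of (\ref{int-D}): any non-negative solution of (\ref{int-D*}) produces, via $v=u+w$, a non-negative solution of the equation (\ref{int-D}) at boundary datum $f+w$, and the latter is unique.

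For the structural identification (\ref{1}), I would just plug the L\'evy–Khintchine form (\ref{bm}) into $\psi^*=\psi(\cdot,\lambda+w)-\psi(\cdot,w)$ and regroup. The quadratic piece produces $\beta\lambda^2+2\beta w\lambda$; the jump piece produces
$$\int_{(0,\infty)}\!\!\bigl(e^{-\lambda z}-1\bigr)e^{-w(x)z}\pi(x,{\rm d}z)+\lambda\!\int_{(0,\infty)}\! z\,\pi(x,{\rm d}z),$$
and rewriting the last linear term as the sum of $\lambda\int z\,e^{-wz}\pi({\rm d}z)$ and $\lambda\int z(1-e^{-wz})\pi({\rm d}z)$ reconstitutes the canonical compensator $(e^{-\lambda z}-1+\lambda z)$ against the tilted L\'evy measure $\pi^*(x,{\rm d}z)=e^{-w(x)z}\pi(x,{\rm d}z)$, while absorbing the leftover linear piece into the drift. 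Collecting coefficients of $\lambda$ yields the formula for $\alpha^*$; the assumption (A) that $w$ is locally bounded together with the moment assumption on $\pi$ ensures that $\alpha^*$ is bounded measurable and that $\pi^*$ still satisfies $x\mapsto\int(z\wedge z^2)\pi^*(x,{\rm d}z)$ is bounded, so $\psi^*$ is of the admissible form.

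Finally, that $(\widetilde X,\mathbb{P}^*_\mu)$ is a superprocess follows because conditioning on $\mathcal E$ is a Doob $h$-transform with the multiplicative, exponential functional $e^{-\langle w,\cdot\rangle}$, which preserves the Markov property; combined with the branching/exponential form of the Laplace functional computed above, the finite-dimensional laws are those of the $(\mathcal P,\psi^*)$-superprocess on $D$. The step I expect to require the most care is the first identity $\mathbb{E}_\mu(\mathbf 1_{\mathcal E}\mid\widetilde X^D_t)=e^{-\langle w,\widetilde X^D_t\rangle}$: it must be justified both for the mass sitting at the lateral boundary (which exited at some time $s<t$ and then evolves as an independent copy by the strong Markov property) and for the mass still alive in $D$ at time $t$, using the consistency (\ref{consistent}) of exit measures across nested domains and an exhaustion of $E$ by bounded $D_n\uparrow E$ to pass from local to global extinction.
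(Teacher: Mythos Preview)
Your proposal is correct and follows essentially the same route as the paper: condition on $\mathcal E$, use the Markov property at the exit measure to turn $\mathbf 1_{\mathcal E}$ into $e^{-\langle w,\widetilde X^D_t\rangle}$, identify $\widetilde u^{D,*}_f=\widetilde u^D_{f+w}-w$, subtract (\ref{int-w-D}) from (\ref{int-D}) to obtain (\ref{int-D*}), and then unpack $\psi^*$ algebraically. The only difference is that your closing worry about the identity $\mathbb E_\mu(\mathbf 1_{\mathcal E}\mid \widetilde X^D_t)=e^{-\langle w,\widetilde X^D_t\rangle}$ is treated in the paper as a direct consequence of the Markov (branching) property of exit measures in Dynkin's framework, with no need for the domain exhaustion you anticipate.
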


\begin{proof}
For all $f\in {\rm bp}(\partial ( D\times[0,t) ) )$ we have
\begin{align}
\mathbb{ E}_{\mu}^*(e^{-\langle f,\widetilde X^D_t\rangle})&=\mathbb{ E}_{\mu}(e^{-\langle f,\widetilde X^D_t\rangle}|\mathcal{ E})\notag\\
&=e^{\langle w,\mu\rangle}\mathbb{ E}_{\mu}(e^{-\langle f,\widetilde X^D_t\rangle}1_{\mathcal{ E}})\notag\\
&=e^{\langle w,\mu\rangle}\mathbb{ E}_{\mu}(e^{-\langle f,\widetilde X^D_t\rangle}\mathbb{ E}_{\widetilde X^D_t}(1_{\mathcal{ E}}))\notag\\
&=e^{\langle w,\mu\rangle}\mathbb{ E}_{\mu}(e^{-\langle f+w,\widetilde X^D_t\rangle})\notag\\
&=e^{-\langle \widetilde u^D_{f+w}(\cdot,t)-w,\mu\rangle}.\notag
\end{align}
Using (\ref{int-D}) and (\ref{int-w-D}) then it is straightforward to check that $\widetilde u_f^{D,*}(x,t)=\widetilde u^D_{f+w}(x,t)-w(x)$ is a non-negative
solution to \eqref{int-D*}, which is necessarily unique.
The proof is complete as soon as we can show that  $\psi^*(x, \lambda)$, restricted to $D$, is a branching mechanism which falls into the appropriate class. One easily verifies the formula (\ref{1}) and that the new parameters $\alpha^*$ and $\pi^*$, restricted to $D$, respect the  properties stipulated in the definition of a branching mechanism in the introduction.
\hfill$\square$
\end{proof}

\begin{cor}\label{local-Laplace}
 For any bounded open domain $D\subset\subset E$, any function $f\in {\rm bp}(D)$ and any $\mu\in {\cal M}_F({D})$ satisfying $\langle w,\mu\rangle<\infty$,
$$-\log \mathbb{ E}^*_{\mu}\left(e^{-\langle f, X^D_t\rangle}\right)=\int_{{D}}u^{D,*}_f(x,t)\mu({\rm d}x),$$
where $u^{D,*}_f(x,t)=\widetilde u^D_{f+w}(x,t)-w(x)$ and
it is the unique solution of
\begin{equation}u^{D,*}_f(x,t)=\Pi_x[f(\xi_{t});t<\tau^D]-\Pi_x\left[\int^{t\wedge\tau_D}_0\psi^*(\xi_s, u^{D,*}_f(\xi_s, t-s)){\rm d}s\right],\quad x\in D,\end{equation}
where $\psi^{*}$ is defined by \eqref{1}.

\end{cor}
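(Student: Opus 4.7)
The plan is to reduce the corollary directly to Lemma \ref{lemma2} by extending $f\in{\rm bp}(D)$ to a function on ${E}\times[0,t]$ via the prescription (\ref{lid}), setting $f\equiv 0$ on $E\setminus D$ and on $\{\dagger\}$ and taking $f$ independent of the time variable. With this extension, the defining identity for $X^D_t$ given just after (\ref{Laplace-D}) yields $\langle f, X^D_t\rangle=\langle f,\widetilde X^D_t\rangle$, hence
$$\mathbb{E}^*_\mu\bigl(e^{-\langle f, X^D_t\rangle}\bigr)=\mathbb{E}^*_\mu\bigl(e^{-\langle f,\widetilde X^D_t\rangle}\bigr),$$
and Lemma \ref{lemma2} delivers the Laplace-functional identity claimed in the corollary with exactly $u^{D,*}_f(x,t)=\widetilde u^{D,*}_f(x,t) = \widetilde u^D_{f+w}(x,t)-w(x)$.

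Next I would verify that, for the extended $f$, the integral equation (\ref{int-D*}) collapses to the equation in the corollary. Since $f$ vanishes off $D$ and is time-independent, on $\{t\ge\tau^D\}$ the terminal value $f(\xi_{t\wedge\tau^D})=f(\xi_{\tau^D})=0$, while on $\{t<\tau^D\}$ it equals $f(\xi_t)$. Consequently
$$\Pi_x\bigl[f(\xi_{t\wedge\tau^D})\bigr]=\Pi_x\bigl[f(\xi_t);\,t<\tau^D\bigr],$$
which is exactly the first term on the right-hand side of the corollary's equation, while the nonlinear integral term is unchanged. Thus $u^{D,*}_f$ solves the displayed integral equation.

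For uniqueness, any non-negative solution $v(x,t)$ of the corollary's equation (defined for $x\in D$) is automatically a solution of (\ref{int-D*}) with the extended $f$: the two equations coincide on $D$ by the computation above, and the nonlinear integrand $\psi^*(\xi_s,v(\xi_s,t-s))$ only ever samples $v$ at spatial points in $D$ because the time cutoff at $\tau^D$ forces $\xi_s\in D$. Uniqueness of solutions to (\ref{int-D*}), already established in Lemma \ref{lemma2}, therefore forces $v\equiv u^{D,*}_f$. The claim that $\psi^*$ restricted to $D$ is a branching mechanism of the form (\ref{bm}) is already dispatched inside the proof of Lemma \ref{lemma2} and needs no repetition. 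I do not anticipate any genuine obstacle here; the corollary is essentially a bookkeeping restriction of Lemma \ref{lemma2} from the exit-measure (stopped-at-$\partial D$) picture to the killed-interior picture, and the only care required is to align the terminal boundary data through the extension (\ref{lid}).
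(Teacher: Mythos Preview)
Your proposal is correct and is exactly the argument the paper has in mind: the corollary is stated without proof precisely because it is the restriction of Lemma~\ref{lemma2} to functions $f\in{\rm bp}(D)$ extended by zero via (\ref{lid}), using the defining identity $\langle f,X^D_t\rangle=\langle f,\widetilde X^D_t\rangle$ and the fact that $\Pi_x[f(\xi_{t\wedge\tau^D})]=\Pi_x[f(\xi_t);t<\tau^D]$ when $f$ vanishes on $D^c$. Your handling of uniqueness, noting that the nonlinear term only samples $\xi_s$ inside $D$ on $\{s<\tau^D\}$, is also the right observation.
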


\subsection{Excursion measure}

Associated to the law of the processes $X$,  are the  measures $\{\mathbb{N}^{*}_x: x\in { E}\}$, defined on the same measurable space as the probabilities $\{\mathbb{P}^*_{\delta_x}:x\in { E}\}$ are defined on, and  which satisfy
\begin{equation}
\mathbb{N}^{*}_x (1- e^{-\langle f, X_t \rangle}) = -\log \mathbb{ E}^{*}_{\delta_x}(e^{-\langle  f, X_t\rangle}) = u^{*}_f(x,t),
\label{DK}
\end{equation}
for all $f\in {\rm bp}( E)$
and $t\geq 0$. Intuitively speaking, the branching property implies that $\mathbb{P}^*_{\delta_x}$ is an infinitely divisible measure on the path space of $X$, that is to say the space of measure-valued cadlag functions, $ \mathbb{D}([0,\infty)\times \mathcal{M}({ E}))$, and  (\ref{DK})  is a `L\'evy-Khinchine' formula in which  $\mathbb{N}^*_x$ plays the role of its `L\'evy measure'. Such measures are formally defined and explored in detail in \cite{DK}.

Note that, by the monotonicity property, for any two open bounded domains, $D_1\subset\subset D_2\subset\subset E$,
\[
\langle f,  X^{D_1}_t\rangle \leq \langle f,  X^{D_2}_t\rangle\qquad \mathbb{N}^*_x\text{-a.e.},
\]
for all $f\in {\rm bp}(D_1)$ understood in the sense of (\ref{lid}), $x\in D_1$ and $t\geq 0$.
Moreover, for an open bounded domain $D$ and $f$ as before, it is also clear that
$\mathbb{N}^*(1- e^{-\langle f, X^D_t \rangle}) =u^{D,*}_f(x,t)$.

The measures $\{\mathbb{N}^{*}_x:x\in{ E}\}$ will play a crucial role in the forthcoming analysis in order to describe the `rate' of a Poisson point process of immigration.

\subsection{A Markov branching process}

In this section we introduce a particular Markov branching process which is built from the components of the $(\psi,\mathcal{P})$-superprocess and which plays a central role in the backbone decomposition.

Recall that we abuse our notation and extend the domain of $w$ with the implicit understanding that $w(\dagger) = 0$. Note, moreover, that thanks to (\ref{localmg}), we have that, for $x\in { E}$, $w(x)^{-1}w(\xi_{t})\exp\left\{-\int_0^{t}\psi(\xi_s,w(\xi_s))/w(\xi_s){\rm d}s\right\}$ is in general a  positive local martingale and hence a supermartingale.
For each $t\geq 0$, let $\mathcal{F}^\xi_t = \sigma(\xi_s: s\leq t)$. Let $\zeta = \inf\{t>0 : \xi_t \in\{\dagger\}\}$ be the life time of $\xi$.
The formula
\begin{equation}\label{cm}
\left.\frac{{\rm d}\Pi_x^{w}}{{\rm d}\Pi_x}\right|_{\mathcal{F}^\xi_t}=\frac{w(\xi_{t})}{w(x)}\exp\left\{-\int_0^{t}\frac{\psi(\xi_s,w(\xi_s))}{w(\xi_s)}{\rm d}s\right\} \qquad \mbox{on } \{t<\zeta\},\quad   t\geq 0, x\in { E},
\end{equation}
uniquely determines a family of (sub-)probability measures $\{\Pi^w_x : x\in { E}\}$.  It is known
that under these new probabilities, $\xi$ is a right Markov process on $ E$; see \cite{Sharpe},
Section 62. We will denote by $\mathcal{P}^{w}$ the semi-group of the $ E\cup\{\dagger\}$-valued process $\xi$ whose probabilities are $\{\Pi_x^{w}:x\in { E}\}$.

\begin{remark}\rm
The equation (\ref{int-w-D}) may  formally be associated with the equation  $Lw(x) -\psi(x,w(x)) = 0$ on $ E$, and the semi-group $\mathcal{P}^w$ corresponds to the diffusion with generator
$$L^w_0:= L^w - w^{-1}Lw  = L^w - w^{-1}\psi(\cdot, w),$$ where
$
L^w u ={w}^{-1} L(wu)
$ for any $u$ in the domain of $L$.  Intuitively speaking, this means that the dynamics associated to $\mathcal{P}^w$, encourages the motion of $\xi$ to visit domains where the global survival rate is high and discourages it from visiting domains where the global survival rate is low. (Recall from (\ref{assumeforeextinguishing}) that larger values of $w(x)$ make extinction of the $(\psi, \mathcal{P})$-superprocess less likely under $\mathbb{P}_{\delta_x}$.)
\end{remark}

Henceforth the process $Z= \{Z_t : t\geq 0\}$ will denote the Markov branching process whose particles move with associated   semi-group $\mathcal{P}^w$. Moreover, the branching generator is given by
\begin{equation}\label{gb}
F(x,s)=q(x)\sum_{n\geq0}p_n(x)(s^n-s),
\end{equation}
where
\begin{equation}
\label{qdef}
q(x)=\psi'(x,w(x))-\frac{\psi(x,w(x))}{w(x)},
\end{equation}
 $p_0(x)=p_1(x)=0$ and for $n\geq2$,
\begin{equation}\label{pl}
p_n(x)=\frac{1}{w(x)q(x)}\left\{\beta(x)w^2(x)1_{\{n=2\}}+w^n(x)\int_{(0,\infty)}\frac{y^n}{n!}e^{-w(x)y}\pi(x,{\rm d}y)\right\}.\notag
\end{equation}
Here we use the notation
\[
\psi'(x,w(x)): = \left.\frac{\partial}{\partial\lambda}\psi(x, \lambda)\right|_{\lambda = w(x)}, \qquad x\in{ E}.
\]
Note that the choice of $q(x)$ ensures that $\{p_n(x): n\geq 0\}$ is a probability mass function. In order to see that $q(x)\geq 0$ for all $x\in { E}$ (but $q\neq 0$), write
\begin{align}
q(x)=\beta(x)w(x)+\frac{1}{w(x)}\int_{(0,\infty)}(1-e^{-w(x)z}(1+w(x)z))\pi(x,{\rm d}z)
\end{align}
and note that $\beta\ge 0$, $w>0$ and
$1-e^{-\lambda z}(1+\lambda z)$, $\lambda\geq 0$, are all non-negative.

\begin{definition}\rm\label{Laplace-2}
In the sequel we shall refer to $Z$ as the  $(\mathcal{P}^w, F)$-backbone. Moreover, in the spirit of Definition \ref{Laplace-1}, for all bounded domains $D$ and $t\geq 0$, we shall also define $\widetilde Z^D_t$ to be the atomic measure,
supported on $\partial(D\times [0,t))$,
describing particles in $Z$ which are first in their genealogical line of descent to exit the domain $D\times [0,t)$.
\end{definition}

Just as with the case of exit measures for superprocesses, we define the random measure, $Z^D = \{Z_t^D: t\geq 0\}$, on $D$ such that
$\langle  f,  Z^D_t\rangle=\langle f, \widetilde Z^{D}_t\rangle$ for any $f\in {\rm bp}(D)$, where we remind the reader that we regard $f$
as a function defined on
${ E}\times [0,\infty)$ as in (\ref{lid}).
As a process in time, $Z^D$ is a Markov branching process,
with branching generator which  is the same as in (\ref{gb}) except that the
 branching rate $q(x)$ is replaced by $q^D(x): = q(x)\mathbf{1}_{D}(x)$,  and
associated motion semi-group given by that of the process $\xi$  killed upon leaving $D$. Similarly to the case of superprocesses, for any two open bounded domains, $D_1\subset\subset D_2\subset\subset E$, the processes $\widetilde{Z}^{D_1}$ and $\widetilde{Z}^{D_2}$
(and hence $Z^{D_1}$ and $Z^{D_2}$) are consistent in the sense that $$\widetilde Z^{D_1}_t= (\widetilde{\widetilde Z_t^{D_2}})^{D_1}$$ for all $t\geq 0$ (and similarly $Z_t^{D_1} = (Z_t^{D_2})^{D_1}$ for all $t\geq 0$).

\section{Local backbone decomposition}\label{localbb}

We are interested in immigrating $(\mathcal{P},\psi^*)$-superprocesses onto the path of an $({\mathcal P}^w, F)$-backbone within the confines of an open, bounded domain $D\subset\subset E$  and initial configuration $\nu\in\mathcal{M}_a(D)$, the space of finite atomic measures in ${D}$ of the form $\sum_{i=1}^n\delta_{x_i}$, where $n\in\mathbb{N}\cup\{0\}$ and $x_1,\cdots,x_n\in D$. There will be three types of immigration: continuous, discontinuous and branch-point immigration which we now describe in detail. In doing so, we shall need to refer to individuals in the process $Z$ for which we shall  use classical Ulam-Harris notation, see for example p290 of Harris and Hardy \cite{HH}. Although the Ulam-Harris labelling of individuals is rich enough to  encode genealogical order, the only feature we really need of the  Ulam-Harris notation is that individuals are uniquely identifiable amongst   $\mathcal{T}$, the set labels of individuals realised in $Z$. For each individual $u\in \mathcal{T}$ we shall write ${b_u}$ and ${d_u}$ for its birth and death times respectively,  $\{z_u(r): r\in[{b_u}, {d_u}]\}$ for its spatial trajectory and $N_u$ for the number of offspring it has at time ${d_u}$. We shall also write $\mathcal{T}^D$ for the set of labels of individuals realised in $Z^D$.
 For each $u\in {\cal T}^D$ we shall also define $$\tau^D_u=\inf\{s\in[{b_u},d_u], z_u(s)\in D^c\},$$
 with the usual convention that $\inf\emptyset : = \infty$.

\begin{definition}\rm
 For $\nu\in\mathcal{M}_a(D)$ and $\mu\in\mathcal{M}_F(D)$, let $Z^D$ be a Markov branching process
 with initial configuration $\nu$, branching generator which  is the same as in (\ref{gb}), except that the
 branching rate $q(x)$ is replaced by $q^D(x): = q(x)\mathbf{1}_{D}(x)$, and
associated motion semi-group given by that of $\mathcal{P}^w$  killed upon leaving $D$. Let  $X^{D,*}$ be  an independent copy of $X^D$ under $\mathbb{P}^*_\mu$. Then we define the measure valued stochastic process $\Delta^D  =\{\Delta^D_t: t\geq 0\}$
such that, for $t\geq 0$,
\begin{equation}\label{Lambda}
\Delta^{D}_t =X^{D,*}_t  +  I^{D,\mathbb{N}^*}_t +  I^{D,\mathbb{P}^*}_t+I^{D,\eta}_t,
\end{equation}
where $I^{D,\mathbb{N}^*} =\{I^{D, \mathbb{N}^*}_t :t\geq 0\}$, $I^{D,\mathbb{P}^*} =\{I^{D,\mathbb{P}^*}_t : t\geq 0\}$ and $I^{D,\eta}=\{I^{D,\eta}_t: t\geq 0\}$ are defined as follows.
\begin{itemize}
\item[i)]({\bf Continuum immigration:})  The process $I^{D,\mathbb{N}^*}$ is measure-valued on
$D$
such that
    $$I^{D,\mathbb{N}^*}_t=\sum_{u\in{\cal T}^D}\sum_{{b_u}< r\le t\wedge{d_u}\wedge\tau^D_u}X_{t-r}^{(D,1,u,r)},$$
   where, given $Z^D$, independently for each $u\in{\cal T}^D$ such that ${b_u}<t$, the processes $X^{(D,1,u,r)}$  are independent copies of the canonical process $X^{D}$, immigrated along the space-time trajectory $\{(z_u(r), r): r\in({b_u}, t\wedge{d_u}\wedge\tau^D_u]\}$ with rate
   \[
   {\rm d}r\times 2\beta(z_u(r)){\rm d}\mathbb{N}^*_{z_u(r)}.
   \]

\item[ii)] ({\bf Discontinuous immigration:}) The process $I^{D, \mathbb{P}^*}$ is measure-valued on
    $D$
    such that
   $$I^{D,\mathbb{P}^*}_t=\sum_{u\in{\cal T}^D}\sum_{{b_u}< r\le t\wedge{d_u}\wedge\tau^D_u}X^{(D,2,u,r)},$$
   where, given $Z^D$, independently for each $u\in{\cal T}^D$ such that ${b_u}<t$, the processes $X^{(D, 2, u,r)}$ are independent copies of the canonical process  $X^{D}$, immigrated along the space-time trajectory $\{(z_u(r), r): r\in({b_u}, t\wedge{d_u}\wedge\tau^D_u]\}$ with rate
   \[
   {\rm d}r\times\int_{y\in(0,\infty)}ye^{-w(z_u(r))y}\pi(z_u(r), {\rm d}y)\times {\rm d}\mathbb{P}^*_{y\delta_{z_u(r)}}.
   \]

\item[iii)]({\bf Branch point biased immigration:}) The process $I^{D,\eta}$ is measure-valued on
$D$
such that
$$I^{D,\eta}_t=\sum_{u\in{\cal T}^D}\mathbf{1}_{\{{d_u}\le t\wedge \tau^D_u\}}X^{(D,3,u)}_{t-{d_u}},$$
where, given $Z^D$, independently for each $u\in{\cal T}^D$ such that ${d_u}<t\wedge\tau^D_u$, the processes $X^{(D, 3, u)}$   are independent copies of the canonical process $X^D$ issued at time ${d_u}$ with law $\mathbb{P}^*_{Y_u\delta_{z_u({d_u})}}$ such that, given $u$ has $n\geq 2$ offspring, the independent random variable $Y_u$ has distribution $\eta_n(z_u(r), {\rm d}y)$, where
\begin{equation}
\eta_n(x, {\rm d}y) =\frac{1}{q(x)w(x)p_n(x)}\left\{
\beta(x)w^2(x)\delta_0({\rm d}y)\mathbf{1}_{\{n = 2\}}+ w(x)^n\frac{y^n}{n!}e^{-w(x)y}\pi(x,{\rm d}y)
\right\}.
\label{il}
\end{equation}

\end{itemize}

It is not difficult to see that $\Delta^D$ is consistent in the domain $D$ in the sense of (\ref{consistent}). Accordingly we denote by  $\mathbf{P}_{(\mu,\nu)}$ the law induced by  $\{\Delta^D_t, D\in {\cal O}( E), t\ge 0\}$, where ${\cal O}( E)$ is the collection of bounded open sets in $ E$.
\end{definition}

The so-called backbone decomposition of $(X^D, \mathbb{P}_\mu)$ for $\mu\in\mathcal{M}_F(D)$ entails looking at the process $\Delta^D$ in the special case that we  randomise the law ${\bf P}_{(\mu,\nu)}$ by replacing the deterministic choice of $\nu$ with a Poisson random measure having intensity measure $w(x)\mu({\rm d}x)$. We denote the resulting law by ${\bf P}_{\mu}$.

\begin{teo}\label{BBth} For any $\mu\in{\cal M}_F({D})$, the process $(\Delta^D, {\bf P}_\mu)$ is Markovian and has the same law as $(X^D, \mathbb{P}_{\mu})$.
\end{teo}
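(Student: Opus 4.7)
My plan is to identify the two laws through their one-dimensional Laplace functionals and then upgrade to equality in law via the Markov property. Concretely, for fixed $f \in \mathrm{bp}(D)$, extended to $E$ via \eqref{lid}, and $\mu \in \mathcal{M}_F(D)$, I will show
\[
\mathbf{E}_\mu\bigl[e^{-\langle f, \Delta^D_t\rangle}\bigr] = \exp\Bigl\{-\int_D u^D_f(x,t)\,\mu(dx)\Bigr\},
\]
after which uniqueness of the non-negative solution to \eqref{int-D'} will force the right-hand side to equal the Laplace functional of $\langle f, X^D_t\rangle$ under $\mathbb{P}_\mu$.

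First I would take $\nu\in\mathcal{M}_a(D)$ deterministic and condition on the backbone $Z^D$, under which the four pieces of $\Delta^D_t$ are independent. The $X^{D,*}_t$ factor contributes $\exp\{-\int_D u^{D,*}_f(x,t)\,\mu(dx)\}$ by Corollary \ref{local-Laplace}. Each of the three immigration processes is Poissonian over the space-time skeleton of $Z^D$, so Campbell's formula produces a contribution $\exp\{-\sum_{u\in\mathcal{T}^D}\Phi(u, Z^D, f, t)\}$, where the per-particle functional $\Phi$ couples to the superprocess side through the identities $\mathbb{N}^*_x(1-e^{-\langle f, X^D_{t-r}\rangle}) = u^{D,*}_f(x, t-r)$ and $\mathbb{E}^*_{y\delta_x}[e^{-\langle f, X^D_{t-r}\rangle}] = e^{-y u^{D,*}_f(x,t-r)}$, together with averages over the branch-point laws $\eta_n$. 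Introducing $V^D_f(x,t) := -\log \mathbf{E}_{(0,\delta_x)}[e^{-\langle f, \Delta^D_t\rangle}]$ and splitting on the first event of the initial backbone particle (exit from $D$, first branch at rate $q$, or survival to time $t$), the Markov branching property of $Z^D$ combined with the Doob transform \eqref{cm} and the equation \eqref{int-w-D} for $w$ will yield an integral equation for $w(x)(1-e^{-V^D_f(x,t)})$ driven by the original semigroup $\Pi_x$. Poissonizing $\nu$ with intensity $w\mu$ and invoking Campbell one more time then gives
\[
\mathbf{E}_\mu\bigl[e^{-\langle f, \Delta^D_t\rangle}\bigr] = \exp\Bigl\{-\int_D L_f(x,t)\,\mu(dx)\Bigr\}, \qquad L_f := u^{D,*}_f + w\bigl(1-e^{-V^D_f}\bigr).
\]

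The crux of the argument, and the step I expect to be the main obstacle, is to verify that $L_f$ solves \eqref{int-D'}, since uniqueness will then force $L_f = u^D_f$. Adding the integral equations for $u^{D,*}_f$, for $w$, and for $w(1 - e^{-V^D_f})$, the linear driving terms assemble correctly to $\Pi_x[f(\xi_t); t<\tau^D]$, and what remains is to recombine the non-linear parts into $-\Pi_x[\int_0^{t\wedge\tau^D}\psi(\xi_s, L_f(\xi_s, t-s))\,ds]$. This reduces to a pointwise algebraic identity that, after substituting the explicit definitions \eqref{qdef}--\eqref{il} of $q$, $p_n$ and $\eta_n$ and expanding the $F$-generator together with the continuum, discontinuous and branch-point immigration rates, telescopes to the single relation $\psi(x, a+w(x)) - \psi(x, w(x)) - a\,\psi'(x, w(x)) = \psi^*(x, a)$ applied at $a = u^{D,*}_f(x,t)$, which is essentially the definition \eqref{1} of $\psi^*$. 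The Markov property of $(\Delta^D, \mathbf{P}_\mu)$ will then follow from the Markov branching property of $Z^D$ and the restart property of the Poisson immigrations, once one observes, via the same Laplace identity conditioned at time $s$, that given $\Delta^D_s$ the backbone atoms $Z^D_s$ form a Poisson random measure with intensity $w\cdot\Delta^D_s$; this Poisson-thinning property ensures that the backbone-plus-immigration recipe regenerates from time $s$ with the correct distribution.
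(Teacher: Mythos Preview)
Your strategy is the paper's strategy: condition on $Z^D$, use Campbell's formula for the Poisson immigrations, obtain an integral equation for the backbone contribution by a first-event decomposition, and then check that $L_f:=u^{D,*}_f+w(1-e^{-V^D_f})$ and $u^D_f$ solve the same equation so that uniqueness identifies them. Two points need correcting.

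First, the algebraic identity you write down is wrong: $\psi(x,a+w(x))-\psi(x,w(x))-a\,\psi'(x,w(x))$ is \emph{not} $\psi^*(x,a)$. The definition in Lemma~\ref{lemma2} is simply $\psi^*(x,a)=\psi(x,a+w(x))-\psi(x,w(x))$, with no derivative term. Once you have the integral equations for $u^{D,*}_f$, for $w$, and for $we^{-V^D_f}$, the nonlinear parts recombine to $-\psi(\cdot,L_f)$ precisely because $\psi(\cdot,w)+\psi^*(\cdot,L_f-w)=\psi(\cdot,L_f)$, which is that definition applied at $\lambda=L_f-w$. The explicit formulae for $q,p_n,\eta_n$ are used one level earlier, in deriving the equation for $we^{-V^D_f}$, not at this recombination step.

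Second, your treatment of the Markov property is underspecified. The key fact---that given $\Delta^D_s$ the measure $Z^D_s$ is Poisson with intensity $w\Delta^D_s$---cannot be read off from the marginal Laplace functional in $f$ alone; it requires the \emph{joint} functional $\mathbf{E}_\mu\bigl[e^{-\langle f,\Delta^D_t\rangle-\langle h,Z^D_t\rangle}\bigr]$. The paper therefore carries an extra test function $h$ on $Z^D_t$ through the entire first-event analysis (Theorem~\ref{spine-decomp}), obtaining an equation for $v^D_{f,h}$, and proves the Markov property \emph{first} by showing that both $u^{D,*}_f+w(1-e^{-v^D_{f,h}})$ and $u^{D,*}_{f+w(1-e^{-h})}+w(1-e^{-v^D_{f+w(1-e^{-h}),0}})$ solve \eqref{int-D} with initial datum $f+w(1-e^{-h})$; the semigroup identity is then the special case $h=0$. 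If you insist on your order you will have to repeat the whole calculation with $h\neq 0$, so nothing is gained.
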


\section{Proof of Theorem \ref{BBth}}\label{5}

The proof involves several intermediary results in the spirit of the non-spatially dependent case of Berestycki et al. \cite{BKMS}. Throughout we take $D$ as an open bounded domain in $ E$.
Any function $f$ defined on $D$ will be extend to
$ E$ by defining $f=0$ on $E\setminus D$.

\begin{lemma}\label{lemma4}
Suppose that  $\mu\in {\cal M}_F(D)$, $\nu\in {\cal M}_a(D)$,   $t\ge 0$ and
 $f\in {\rm bp}(D)$,
 We have
$${\bf E}_{(\mu,\nu)}\left(e^{-\langle f, I^{D, \mathbb{N}^*}_t+I^{D, \mathbb{P}^*}_t\rangle}|\{Z^D_s: s\le t\}\right)=\exp\left\{-\int^t_0\langle\phi(\cdot, u^{D,*}_f(\cdot, t-s)), Z^D_s\rangle {\rm d}s\right\},$$
where
\begin{equation}
\phi(x, \lambda) = 2\beta(x)\lambda + \int_{(0,\infty)} (1- e^{-\lambda z})z\pi(x,{\rm d}z), \qquad x\in D,\, \lambda\geq 0.
\label{els}
\end{equation}
\end{lemma}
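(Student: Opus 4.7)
The plan is to condition on $\{Z^D_s : s \le t\}$ and exploit the fact that, by construction, given the backbone, the immigration processes $I^{D,\mathbb{N}^*}$ and $I^{D,\mathbb{P}^*}$ are independent Poisson point processes of superprocesses scattered along the individual backbone trajectories. The conditional Laplace transform therefore factorises into a product over particles $u \in \mathcal{T}^D$ that are alive during $[0,t]$ and over the two immigration types, and each factor is computable via the exponential (Campbell) formula for Poisson point processes.

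For the continuum piece, the intensity along $u$'s trajectory is ${\rm d}r \otimes 2\beta(z_u(r))\,{\rm d}\mathbb{N}^*_{z_u(r)}$; Campbell's formula combined with the defining identity \eqref{DK} of the excursion measure yields a log-Laplace contribution of $\int_{b_u}^{t \wedge d_u \wedge \tau^D_u} 2\beta(z_u(r))\, u^{D,*}_f(z_u(r), t-r)\,{\rm d}r$. For the discontinuous piece, the intensity is ${\rm d}r \otimes y e^{-w(z_u(r))y}\pi(z_u(r),{\rm d}y) \otimes {\rm d}\mathbb{P}^*_{y\delta_{z_u(r)}}$; Campbell combined with the branching identity $\mathbb{E}^*_{y\delta_x}(e^{-\langle f, X^D_{t-r}\rangle}) = e^{-y u^{D,*}_f(x, t-r)}$ yields
\[
\int_{b_u}^{t\wedge d_u \wedge \tau^D_u}{\rm d}r\int_{(0,\infty)} y e^{-w(z_u(r))y}\bigl(1 - e^{-y u^{D,*}_f(z_u(r), t-r)}\bigr)\pi(z_u(r), {\rm d}y).
\]
Adding the two contributions, the integrand on each trajectory can be identified with $\phi(z_u(r), u^{D,*}_f(z_u(r), t-r))$ after absorbing the $e^{-wy}$ weight into the Lévy-measure piece.

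To finish, one sums over $u$ and converts the per-particle integrals into an integral against the backbone measure through the routine identity
\[
\sum_{u \in \mathcal{T}^D}\int_{b_u}^{t \wedge d_u \wedge \tau^D_u} h(z_u(s), s)\,{\rm d}s = \int_0^t \langle h(\cdot, s), Z^D_s\rangle\,{\rm d}s,
\]
which is valid because $Z^D_s$ is, by definition, the atomic measure supported at the positions of the backbone particles alive at time $s$ that have historically remained inside $D$. Specialising to $h(x,s) = \phi(x, u^{D,*}_f(x, t-s))$ gives the claimed identity. The main technical point is rigorously justifying the conditional Poisson point process structure of the immigration together with the truncation of each Poisson point process at the exit time $\tau^D_u$; once this is set up, the computation is entirely mechanical via Campbell's formula and the Dynkin--Kuznetsov relation \eqref{DK}.
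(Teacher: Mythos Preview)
Your proposal is correct and follows essentially the same route as the paper's proof: condition on the backbone, apply Campbell's formula separately to the $\mathbb{N}^*$- and $\mathbb{P}^*$-immigration Poisson point processes along each particle's trajectory, invoke $\mathbb{N}^*_x(1-e^{-\langle f,X^D_s\rangle})=u^{D,*}_f(x,s)$ for the continuum piece and the branching identity $\mathbb{E}^*_{y\delta_x}(e^{-\langle f,X^D_{t-r}\rangle})=e^{-yu^{D,*}_f(x,t-r)}$ for the discontinuous piece, then sum over $u\in\mathcal{T}^D$. The only addition in your write-up is the explicit conversion of the per-particle time integrals into $\int_0^t\langle\phi(\cdot,u^{D,*}_f(\cdot,t-s)),Z^D_s\rangle\,{\rm d}s$, which the paper leaves implicit in the phrase ``Combining (\ref{i1}) and (\ref{i2}) the desired result follows.''
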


\begin{proof}
We write
\begin{equation}
\langle f, I^{D,\mathbb{N}^*}_t+I^{D,\mathbb{P}^*}_t\rangle=\sum_{u\in\mathcal{T}^D}\sum_{{b_u}<r\leq t\wedge{d_u}\wedge\tau^D_u}\langle f,X_{t-r}^{(D,1,u,r)}\rangle+\sum_{u\in\mathcal{T}^D}\sum_{{b_u}<r\leq t\wedge{d_u}\wedge\tau^D_u}\langle f,X_{t-r}^{(D,2,u,r)}\rangle.\notag
\end{equation}
Hence conditioning on $Z^D$, appealing to the independence of the immigration processes together with Campbell's formula and  that $\mathbb{N}^*_{x}(1-e^{-\langle f,X^D_{s}\rangle})=u_f^{D,*}(x,s)$, we have
\begin{eqnarray}
\lefteqn{\mathbf{ E}_{(\mu,\nu)}(e^{-\langle f, I^{D,\mathbb{N}^*}_t\rangle}|\{Z^D_s:s\leq t\})}&&\notag\\
&&=\exp\left\{-\sum_{u\in\mathcal{T}^D}
2\int_{{b_u}}^{t\wedge{d_u}\wedge\tau^D_u}\beta(z_u(r))\cdot\mathbb{N}^*_{z_u(r)}(1-e^{-\langle f,X^D_{t-r}\rangle}){\rm d}r\right\}\notag\\
&&=\exp\left\{-\sum_{u\in\mathcal{T}^D}2\int_{{b_u}}^{t\wedge{d_u}\wedge\tau^D_u}
\beta(z_u(r))u_f^{D,*}(z_u(r),t-r){\rm d}r\right\}.
\label{i1}
\end{eqnarray}
On the other hand
\begin{align}\label{i2}
\mathbf{ E}_{(\mu,\nu)}&(e^{-\langle f, I^{D,\mathbb{P}^*}_t\rangle}|\{Z^D_s:s\leq t\})\notag\\
&=\exp\left\{-\sum_{u\in\mathcal{T}^D}\int_{{b_u}}^{t\wedge{d_u}\wedge\tau^D_u}\int_0^\infty ye^{-w(z_u(r))y}\pi(z_u(r),{\rm d}y)\mathbb{ E}^*_{y\delta_{z_u(r)}}(1-e^{-\langle f,X^D_{t-r}\rangle}){\rm d}r\right\}\notag\\
&=\exp\left\{-\sum_{u\in\mathcal{T}^D}\int_{{b_u}}^{t\wedge{d_u}\wedge\tau^D_u}\int_0^\infty (1-e^{-u_f^{D,*}(z_u(r),t-r)y})ye^{-w(z_u(r))y}\pi(z_u(r),{\rm d}y){\rm d}r\right\}.
\end{align}
Combining (\ref{i1}) and (\ref{i2}) the desired result follows.
\qed
\end{proof}

\begin{lemma}\label{int-equ} Suppose that the real-valued  function $J(s, x, \lambda)$ defined on $[0, T)\times D\times\R$ satisfies that for any $c>0$ there is a constant $A(c)$ such that
$$|J(s, x, \lambda_1)-J(s, x,\lambda_2)|\le A(c)|\lambda_1-\lambda_2|,$$
for all $s\in[0,T)$, $x\in D$ and $\lambda_1,\lambda_2\in[-c, c]$.
Then for any bounded measurable function $g(s, x)$ on $[0, T)\times D$, the integral equation
$$v(t,x)=g(t,x)+\int^t_0\Pi_x\left[J(t-s, \xi_s, v(t-s, \xi_s)); s<\tau^D\right]{\rm d}x,\quad t\in[0, T),$$
has at most one bounded solution.
\end{lemma}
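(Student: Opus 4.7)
The plan is to argue uniqueness by a standard Gronwall / Picard iteration based on the Lipschitz hypothesis on $J$. The setting is tailor-made for this because the only nonlinearity in the integral equation enters through $J(s,x,\lambda)$ in the $\lambda$-slot, and we have quantitative control over the Lipschitz constant on every bounded strip $[-c,c]$.

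Suppose $v_1, v_2$ are two bounded solutions of the stated integral equation on $[0,T)\times D$, and set $c := \max(\|v_1\|_\infty, \|v_2\|_\infty) < \infty$. Subtracting the two equations, applying the triangle inequality inside the expectation $\Pi_x$, and invoking the Lipschitz hypothesis with constant $A(c)$ (since both $v_i(t-s,\xi_s)$ lie in $[-c,c]$), I would obtain
\[
|v_1(t,x)-v_2(t,x)| \;\le\; A(c)\int_0^t \Pi_x\bigl[\,|v_1(t-s,\xi_s)-v_2(t-s,\xi_s)|\,;\,s<\tau^D\,\bigr]\,{\rm d}s.
\]
Note that on the event $\{s<\tau^D\}$ one has $\xi_s\in D$, so the integrand is dominated by the uniform quantity $G(t-s):=\sup_{0\le r\le t-s,\;y\in D}|v_1(r,y)-v_2(r,y)|$. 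The function $G:[0,T)\to[0,2c]$ is non-decreasing (hence Borel measurable), so Lebesgue integration against ${\rm d}s$ is unproblematic, which neatly sidesteps any measurable-selection worry about pointwise suprema in $x$.

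Bounding the right-hand side by $A(c)\int_0^t G(u)\,{\rm d}u$, taking $\sup$ over $x\in D$ and then over the time interval $[0,t]$ (using that $\int_0^{\cdot}G(u)\,{\rm d}u$ is itself non-decreasing), yields the key inequality
\[
G(t) \;\le\; A(c)\int_0^t G(u)\,{\rm d}u, \qquad t\in[0,T_0],
\]
for any $T_0<T$. Since $G$ is bounded and measurable, the classical Gronwall inequality forces $G\equiv 0$ on $[0,T_0]$, and letting $T_0\uparrow T$ gives $v_1=v_2$. Alternatively, iterating the inequality $n$ times gives the Picard-type bound $G(t)\le 2c(A(c)t)^n/n!\to 0$, which makes no appeal to Gronwall at all.

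There is no substantial obstacle: the only mildly delicate point is the measurability of the dominating function used inside the ${\rm d}s$-integral, which is handled precisely by working with the monotone envelope $G$ rather than $\sup_{x\in D}|v_1(t,x)-v_2(t,x)|$. Everything else is a mechanical application of the hypothesis.
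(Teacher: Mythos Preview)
Your proposal is correct and follows essentially the same approach as the paper: subtract the two equations, apply the Lipschitz bound on $J$, and invoke Gronwall's inequality to conclude that the supremum of the difference vanishes. The paper's proof is terser---it works directly with $\|v_1-v_2\|(t)=\sup_{x\in D}|v_1(t,x)-v_2(t,x)|$ and cites Gronwall without addressing measurability---so your use of the monotone envelope $G$ is a slight polishing rather than a genuinely different route.
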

\begin{proof} Suppose that $v_1$ and $v_2$ are two solutions, then there is a constant $c>0$ such that $-c\le v_1, v_2\le c$ and
$$\|v_1-v_2\|(t)\le A(c) \int^t_0\|v_1-v_2\|(s){\rm d}s,$$
where $\|v_1-v_2\|(t)=\sup_{x\in D}|v_1(t, x)-v_2(t, x)|$, $t\in(0, T).$ It follows from Gronwall's lemma (see, for example, Lemma 1.1 on page 1208 of \cite{D1}) that $\|v_1-v_2\|(t)=0, t\in[0, T)$.

\end{proof}

\begin{lemma}\label{lemma5} Fix $t>0$.
Suppose that
$f, h\in {\rm bp}(D)$
and $g_s(x)$ is jointly measurable in $(x,s)\in D\times [0,t]$ and bounded on finite time horizons of $s$ such that $g_s(x) = 0$ for
$x\in D^c$.
Then for any $\mu\in{\cal M}_F(D)$, $x\in D$ and $t\ge 0$,
$$
e^{- W(x, t)}
:=
{\bf E}_{(\mu,\delta_x)}\left[\exp\left(-\int^t_0\langle g_{t-s}, Z^D_s\rangle {\rm d}s-\langle f, I^{D, \eta}_t\rangle-\langle h, Z^D_t\rangle\right)\right],
$$
where $e^{- W(x, t)}$ is the unique $[0,1]$-valued solution to the integral equation
\begin{eqnarray}\label{int-W}
w(x)e^{-W(x,t)}&=&\displaystyle
{\color{black}\Pi_x\left[w(\xi_{t}) e^{-h(\xi_{t})},t<\tau_D\right]}\notag\\
&&+\Pi_{x}
\Bigg[\int^{t\wedge\tau_{D}}_0[H_{t-s}(\xi_s,-w(\xi_s)e^{-W(\xi_s, t-s)})-w(\xi_s)e^{-W(\xi_s, t-s)}g_{t-s}(\xi_s)\notag\\
&&\hspace{3cm}-\psi(\xi_s, w(\xi_s))e^{-W(\xi_s,t-s)}]{\rm d}s\Bigg],
\end{eqnarray}
for $x\in D$, where
$$H_{t-s}(x,\lambda)=q(x)\lambda +\beta(x)\lambda^2+\int^{\infty}_0(e^{-\lambda y}-1+\lambda y)e^{-(w(x)+u^{D,*}_f(x, t-s))y}\pi(x,{\rm d}y),\quad x\in D,$$
and $q(x)$ was defined in (\ref{qdef}).
\end{lemma}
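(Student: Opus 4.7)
The plan is to verify the integral equation (\ref{int-W}) by a first-jump decomposition of the root particle of $Z^D$ under $\mathbf{P}_{(\mu,\delta_x)}$, and to conclude uniqueness via Lemma \ref{int-equ}. Throughout write $U(x,t):=w(x)e^{-W(x,t)}$.

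\textbf{First-jump decomposition.} The root particle moves as $\xi$ under $\Pi_x^w$ and, conditional on its trajectory, its first branching time $\sigma$ is the first atom of an inhomogeneous Poisson clock with rate $q(\xi_r)\,dr$. I decompose $e^{-W(x,t)}$ into three mutually exclusive cases: (A) $\sigma\wedge\tau^D>t$; (B) $\tau^D\le t<\sigma$; (C) $\sigma<t\wedge\tau^D$. In case (C), the branching Markov property combined with Corollary \ref{local-Laplace} to evaluate the conditional Laplace transform of the immigrated $X^{(D,3,\emptyset)}_{t-\sigma}$ under $\mathbb{P}^*_{Y_\emptyset\delta_{\xi_\sigma}}$ shows that the factor per unit $d\sigma$ attached to the branch event reads
$$q(\xi_\sigma)\sum_{n\ge 2}p_n(\xi_\sigma)\,e^{-nW(\xi_\sigma,t-\sigma)}\int_0^\infty e^{-y\,u_f^{D,*}(\xi_\sigma,t-\sigma)}\,\eta_n(\xi_\sigma,dy).$$
Substituting (\ref{il}) for $\eta_n$ and (\ref{pl}) for $p_n$ cancels the normalising constants, and the sum over $n\ge 2$ collapses inside the L\'evy integral via $\sum_{n\ge 2}(ay)^n/n!=e^{ay}-1-ay$ with $a=w(\xi_\sigma)e^{-W(\xi_\sigma,t-\sigma)}$. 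A direct calculation then identifies this factor as $H_{t-\sigma}(\xi_\sigma,-w(\xi_\sigma)e^{-W(\xi_\sigma,t-\sigma)})/w(\xi_\sigma)+q(\xi_\sigma)e^{-W(\xi_\sigma,t-\sigma)}$.

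\textbf{Change of measure and assembly.} Apply the Radon--Nikodym identity (\ref{cm}) to convert all $\Pi_x^w$-expectations into $\Pi_x$-expectations weighted by $w(\xi_\cdot)/w(x)\cdot\exp\{-\int_0^\cdot\psi(\xi_r,w(\xi_r))/w(\xi_r)\,dr\}$, and multiply both sides by $w(x)$. Using $q(x)+\psi(x,w(x))/w(x)=\psi'(x,w(x))$ from (\ref{qdef}), the ``no-branch survival'' Feynman--Kac exponential of cases (A)--(B) combines with the change-of-measure factor into $\exp\{-\int_0^{t\wedge\tau^D}[\psi'(\xi_s,w(\xi_s))+g_{t-s}(\xi_s)]\,ds\}$. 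The boundary value $w(\xi_{t\wedge\tau^D})e^{-h(\xi_{t\wedge\tau^D})}$ sitting in front of this exponential splits as $w(\xi_t)e^{-h(\xi_t)}\mathbf{1}_{t<\tau^D}+w(\xi_{\tau^D})\mathbf{1}_{\tau^D\le t}$ since $h=0$ outside $D$, and the second piece is handled by (\ref{int-w-D}) applied at the stopping time $t\wedge\tau^D$. Expanding the Feynman--Kac exponential via $e^{-\int_0^T V\,ds}=1-\int_0^T V(s)e^{-\int_s^T V\,dr}\,ds$ and using the strong Markov property at $s$ to recognise the resulting tail conditional expectation as $U(\xi_s,t-s)$ (after adding back the contribution of case (C) at times beyond $s$), the pieces reassemble as follows: the $\psi'$-drift yields $-\Pi_x\bigl[\int(q+\psi/w)U\,ds\bigr]$, whose $qU$-part exactly cancels the $qU$-contribution coming from the branch factor in case (C); the $g$-drift provides $-\Pi_x\bigl[\int U\,g_{t-s}\,ds\bigr]$; what remains of the $\psi'$-drift is $-\Pi_x\bigl[\int\psi(\cdot,w)e^{-W}\,ds\bigr]$. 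Adding the $H$-piece of case (C) and the term $\Pi_x[w(\xi_t)e^{-h(\xi_t)};t<\tau^D]$ produces exactly the right-hand side of (\ref{int-W}).

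\textbf{Uniqueness.} The equation (\ref{int-W}), rewritten for $U=we^{-W}$, has the form required by Lemma \ref{int-equ} with
$$J(s,x,\lambda)=H_s(x,-\lambda)-\lambda g_s(x)-\psi(x,w(x))\lambda/w(x).$$
Local Lipschitz continuity of $J$ in $\lambda$ on any bounded interval $[-c,c]$, uniformly in $(s,x)\in[0,T)\times D$, follows from boundedness of $\beta$, $q$, $g_s$, $w$, $w^{-1}$, and $\psi(\cdot,w(\cdot))$ on $D$ (the bound on $w^{-1}$ being assumption (A)), together with the estimate $|\partial_\lambda H_s(x,-\lambda)|\le|q(x)|+2\beta(x)|\lambda|+\int_0^\infty y|e^{|\lambda|y}-1|e^{-(w(x)+u_f^{D,*}(x,s))y}\pi(x,dy)$ which is finite thanks to the exponential tilt $e^{-wy}$ and $\int(y\wedge y^2)\pi(x,dy)<\infty$. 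The principal technical obstacle is the intricate algebraic bookkeeping in the assembly step: matching the sum over $n\ge 2$ in case (C), the change-of-measure correction, the exit boundary contribution from case (B), and the Feynman--Kac expansion so that all cross-terms cancel through the identity $q=\psi'(\cdot,w)-\psi(\cdot,w)/w$ and the equation (\ref{int-w-D}) satisfied by $w$, producing exactly the integrand prescribed in (\ref{int-W}).
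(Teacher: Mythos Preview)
Your proposal follows essentially the same route as the paper: condition on the first branching time of the root particle under $\Pi_x^w$, identify the branch-point factor via (\ref{il}) and (\ref{pl}) as $H_{t-s}(\cdot,-we^{-W})/w + qe^{-W}$, apply the change of measure (\ref{cm}) to pass to $\Pi_x$, perform a Feynman--Kac conversion to reach (\ref{int-W}), and invoke Lemma \ref{int-equ} for uniqueness. The only stylistic difference is that the paper first reduces to time-invariant $g$ (citing \cite{EO}) and then appeals to Lemma 1.2 of \cite{D2} as a black box for the Feynman--Kac step, whereas you keep $g_{t-s}$ time-dependent and unfold the exponential by hand together with the strong Markov property.
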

\begin{proof}
Following Evans and O'Connell \cite{EO} it suffices to prove the result in the case when $g$ is time invariant. To this end, let us start by defining the semi-group $\mathcal{P}^{h, D}$ by
\begin{eqnarray}\label{F-K-semi}
\mathcal{P}^{h,D}_t[k](x)&=&\displaystyle\Pi_x\left(e^{-\int_0^{t\wedge\tau^D}h(\xi_s){\rm d}s}k(\xi_{t\wedge\tau^D})\right)\notag\\
&=&{\color{black}\displaystyle\Pi_x\left(e^{-\int_0^{t}h(\xi_s){\rm d}s}k(\xi_{t}); t<\tau^D\right),}
\quad\text{ for }h,k\in {\rm bp}({\color{black} D}),
\end{eqnarray}
where, for convenience, we shall write
\begin{equation}\label{semi-w-D}\mathcal{P}^{D}_t[k]=\mathcal{P}^{0,D}_t[k].
\end{equation}

Define the function $\chi(x) =
\psi(x, w(x))/w(x)$. Conditioning on the first splitting time in the process $Z^D$ and recalling that the branching occurs at the spatial rate
$q^D(x)=\mathbf{1}_D(x)( \psi'(x, w(x))
- \chi(x))$ we get that for any $x\in D$,
\begin{eqnarray}\label{i3}
e^{-W(x,t)}&=&\frac{1}{w(x)}\mathcal{P}^{g+q+\chi,D}_t[we^{-h}](x)\\
&&\hspace{-2cm}+\Pi^w_x\Bigg[\int_0^{t\wedge\tau^D}\exp\left(-\int^s_0(g+q)(\xi_r){\rm d}r\right)\notag\\
&&\left\{q(\xi_s)\sum_{n\geq2}p_n(\xi_s)e^{-nW(\xi_s,t-s)}\int_{(0,\infty)}\eta_n(\xi_s,{\rm d}y)e^{-yu_f^{D,*}(\xi_s,t-s)}\right\}{\rm d}s\Bigg].
\end{eqnarray}
From (\ref{il}) we quickly find that for $x\in D$
\begin{align}
\sum_{n\geq2}&p_n(x)e^{-nW(x,t-s)}\int_{(0,\infty)}\eta_n(x,{\rm d}y)e^{-yu_f^{D,*}(x,t-s)}\notag\\
&=\frac{1}{q(x)w(x)}\left\{H_{t-s}(x,-w(x)e^{-W(x,t-s)})+w(x)q(x)e^{-W(x,t-s)}\right\}.\notag
\end{align}
Using the above expression in (\ref{i3}) we have that
\begin{eqnarray*}
w(x)e^{-W(x,t)}&=&\mathcal{P}^{g+q+\chi,D}_t[w e^{-h}](x)\\
&&\hspace{-2cm}+\Pi_x\Bigg[\int_0^{t\wedge\tau^D}\exp\left(-\int^s_0(g+q+\chi)(\xi_r){\rm d}r\right)\\
&&
\left[(H_{t-s}(\xi_s,-w(\xi_s)e^{-W(\xi_s,t-s)})+w(\xi_s)q(\xi_s)e^{-W(\xi_s,t-s)})\right]{\rm d}s\Bigg].
\end{eqnarray*}
 Now appealing to Lemma 1.2 in Dynkin \cite{D2}
 and recalling that $ \chi(\cdot) = \psi(\cdot,w(
 \cdot))/w(\cdot)$  on $D$, we may deduce that for any $x\in D$,
\begin{eqnarray}
\hspace{-2cm}w(x)e^{-W(x,t)}&=&\mathcal{P}_t^{D}[we^{-h}](x)\notag\\
&&\hspace{-2cm}+\Pi_x\Bigg[\int_0^{t\wedge\tau^D}
[H_{t-s}(\xi_s,-w(\xi_s)e^{-W(\xi_s,t-s)})-w(\xi_s)g(\xi_s)e^{-W(\xi_s,t-s)}\notag \\
&&\hspace{5cm}- \psi(\xi_s, w(\xi_s)) e^{-W(\xi_s, t-s)}
]{\rm d}s\Bigg]
\label{useunique}
\end{eqnarray}
as required. Note that in the above computations we have implicitly used that $w$ is uniformly bounded away from 0 and $\infty$ on $D$.

To complete the proof we need to show uniqueness of solutions to (\ref{useunique}).  Lemma \ref{int-equ}  offers sufficient conditions for uniqueness of solutions to a general family of integral equations which includes (\ref{useunique}). In order to check these sufficient conditions, let us define $\overline{w}^D = \sup_{y\in D} w(y)$.   Thanks to
 Assumption (A) we have that $0<\overline{w}^D<\infty$.
For $s\geq 0$, $x\in D$ and $\lambda\in[0,\overline{w}^D]$, define the function $J(s, x, \lambda): = [H_s(x,-\lambda)-(g(x)+\chi(x))\lambda ]$.
We rewrite (\ref{useunique}) as
$$
w(x)e^{-W(x,t)}=\mathcal{P}_t^{D}[we^{-h}](x)+\int_0^{t}\Pi_x\left[J(t-s,\xi_s, w(\xi_s)e^{-W(\xi_s,t-s)}); s<\tau^D\right]
{\rm d}s.
$$
Lemma \ref{int-equ}  tells us that (\ref{useunique}) has a unique solution as soon as we can show that $J$ is continuous in $s$ and that for each fixed $T>0$, there exists a $K>0$ (which may depend on $D$ and $T$) such that
\[
\sup_{s\leq T}\sup_{y\in D}|J (s, y, \lambda_1) - J(s, y, \lambda_2)|\leq K |\lambda_1 - \lambda_2|
,\quad \lambda_1, \lambda_2\in (0,\overline{w}^D].\]
Recall that  $g(y)$ is assumed to be bounded,
moreover, Assumption (A) together with the fact that
\begin{equation}
\sup_{y\in D}\left\{ |\alpha(y)| + \beta(y) + \int_{(0,\infty)}( z\wedge z^2) \pi(y,{\rm d}z)\right\}<\infty
\label{bounded}
\end{equation}
also implies that $\chi$ is bounded on $D$.
Appealing to  the triangle inequality, it now suffices to check
that
for each fixed $T>0$, there exists a $K>0$ such that
\begin{equation}
\sup_{s\leq T}\sup_{y\in D}|H_s(y, -\lambda_1) - H_s(y, -\lambda_2)|\leq K|\lambda_1 - \lambda_2|,\quad \lambda_1, \lambda_2\in (0,\overline{w}^D].
\label{check-this}
\end{equation}
First note from Proposition 2.3 of Fitzsimmons \cite{fitz} that
\begin{equation}
\sup_{s\leq T}\sup_{x\in D}u^{D,*}_f(x,s)<\infty.
\label{u*bounded}
\end{equation}
Straightforward differentiation of the function $H_s(x, -\lambda)$ in the variable $\lambda$ yields
\[
-\frac{\partial}{\partial\lambda}H_s(x, - \lambda)  = q(x) -2\beta(x)\lambda + \int_{(0,\infty)} (1-{\rm e}^{\lambda z}){\rm e}^{-(w(x) + u^{D,*}_f(x,s))z}z\pi(x, {\rm d}z).
\]
Appealing to (\ref{bounded}) and (\ref{u*bounded}) it is not difficult to show that the derivative above is uniformly bounded in absolute value for $s\leq T$, $x\in D$ and $\lambda\in[0,\overline{w}^D]$, from which (\ref{check-this}) follows by straightforward linearisation. The proof is now complete.
\qed
\end{proof}

\begin{teo}\label{spine-decomp} For every $\mu\in\mathcal{M}_F(D)$,  $\nu\in\mathcal{M}_a(D)$ and $f, h\in{\rm bp}(D)$ 
\begin{equation}\label{Laplace-Delta-Z}{\bf E}_{(\mu, \nu)}\left(e^{-\langle f, \Delta^{D}_t\rangle-\langle h, Z^{D}_t\rangle}\right)=e^{-\langle u^{D,*}_f(\cdot, t),\mu\rangle-\langle v^D_{f,h}(\cdot,t),\nu\rangle},\end{equation}
where $e^{-v^D_{f,h}(x,t)}$ is the unique $[0,1]$-solution to the integral equation
\begin{eqnarray}\label{int4}
w(x)e^{-v^D_{f,h}(x,t)}&=&
\Pi_x\left[w(\xi_{t}) e^{-h(\xi_{t})}; t<\tau_D\right]\notag\\
&&\hspace{-3.5cm}+\Pi_{x}\left[\int^{t\wedge\tau_{D}}_0[\psi^*(\xi_s,-w(\xi_s)e^{-v^D_{f,h}(\xi_s, t-s)}+u^{D,*}_f(\xi_s, t-s))-\psi^*(\xi_s, u^{D,*}_f(\xi_s, t-s))]{\rm d}s\right].\end{eqnarray}
\end{teo}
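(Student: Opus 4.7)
The plan is to exploit the decomposition $\Delta^D_t = X^{D,*}_t + (I^{D,\mathbb{N}^*}_t + I^{D,\mathbb{P}^*}_t) + I^{D,\eta}_t$ together with the independence structure built into Definition 4.1. Because $X^{D,*}$ is independent of $Z^D$ and of all three immigrated pieces, the joint Laplace transform factors as
$$
\mathbb{E}^{*}_\mu\!\left(e^{-\langle f, X^D_t\rangle}\right)\cdot \mathbf{E}_{(\mu,\nu)}\!\left(e^{-\langle f, I^{D,\mathbb{N}^*}_t+I^{D,\mathbb{P}^*}_t+I^{D,\eta}_t\rangle -\langle h, Z^D_t\rangle}\right).
$$
The first factor is $\exp(-\langle u^{D,*}_f(\cdot,t),\mu\rangle)$ by Corollary \ref{local-Laplace}, which already supplies the $\mu$-part of the right-hand side of (\ref{Laplace-Delta-Z}). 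So the remainder of the argument is dedicated to showing that the second factor equals $\exp(-\langle v^D_{f,h}(\cdot,t),\nu\rangle)$.

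For the second factor I would condition on $\sigma(Z^D_s : s \le t)$. Given $Z^D$, the continuum and discontinuous immigrations are independent of the branch-point immigration, so Lemma \ref{lemma4} produces the conditional contribution
$$
\exp\!\left\{-\int_0^t \langle \phi(\cdot, u^{D,*}_f(\cdot, t-s)),\, Z^D_s\rangle\,\mathrm{d}s\right\}.
$$
Define $g_s(x) := \phi(x, u^{D,*}_f(x, s))$ on $D$, extended by $0$ off $D$; this is jointly measurable and bounded on $[0,t]\times D$ by Assumption (A), the boundedness (\ref{bounded}), and the uniform bound (\ref{u*bounded}). Taking the outer expectation and using that $Z^D$ issued from $\nu = \sum_i \delta_{x_i}$ decomposes, by the branching property, into independent copies issued from each $\delta_{x_i}$, Lemma \ref{lemma5} applied atomwise yields that the remaining factor equals $\exp(-\langle W(\cdot, t), \nu\rangle)$, where $e^{-W(\cdot, t)}$ is the unique $[0,1]$-valued solution of the integral equation (\ref{int-W}) for this choice of $g$.

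The final and most substantial step will be to identify $W$ with $v^D_{f,h}$, i.e.\ to show that, with $g_s = \phi(\cdot, u^{D,*}_f(\cdot, s))$, the equation (\ref{int-W}) is literally (\ref{int4}). Setting $\lambda := w(x) e^{-W(x,t-s)}$ and $\theta := u^{D,*}_f(x,t-s)$, the integrands coincide precisely when
$$
H_{t-s}(x,-\lambda) \;-\; \lambda\,\phi(x,\theta) \;-\; \chi(x)\,\lambda \;=\; \psi^{*}(x, \theta - \lambda) - \psi^{*}(x, \theta),
$$
where $\chi(x) = \psi(x,w(x))/w(x)$. Expanding the right-hand side via $\psi^{*}(x,\cdot) = \psi(x, \cdot + w(x)) - \psi(x, w(x))$ in L\'evy-Khinchine form and matching term by term against the definitions of $H_{t-s}$, $\phi$ and $\pi^{*} = e^{-w z}\pi$, the $\beta\lambda^2$ contributions and all $\int (e^{\lambda z} - 1 - \lambda z) e^{-(w+\theta)z}\pi(\mathrm{d}z)$ contributions cancel, and, after dividing by $\lambda$, what remains reduces exactly to the identity $q + \chi = \psi'(\cdot, w)$ from (\ref{qdef}). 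Uniqueness of solutions of (\ref{int4}) follows from the same Gronwall argument used in Lemma \ref{lemma5} via Lemma \ref{int-equ}, which, using Assumption (A) and (\ref{bounded}), (\ref{u*bounded}), supplies the required Lipschitz constant. Hence $W = v^D_{f,h}$, as needed.

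The one genuinely delicate point is the algebraic identity above; the remainder is bookkeeping around Corollary \ref{local-Laplace} and Lemmas \ref{lemma4} and \ref{lemma5}. The identity itself is the \emph{raison d'\^etre} of the backbone construction: the rates $2\beta$, $y e^{-w(x)y}\pi$ and the branch-point distribution $\eta_n$ in Definition 4.1 are reverse-engineered precisely so that the three immigration mechanisms, when paired with the branching rate $q$ of the backbone, reproduce the Laplace exponent $\psi^{*}$ of the $\mathbb{P}^{*}$-superprocess. Assumption (A) and (\ref{bounded}) are what make the integrals defining $H$, $\phi$ and $\psi^{*}$ finite and uniformly controlled on $D$, thereby underpinning the uniqueness step that closes the identification $W = v^D_{f,h}$.
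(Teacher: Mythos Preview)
Your proposal is correct and follows essentially the same route as the paper: factor off the independent $X^{D,*}$ contribution via Corollary~\ref{local-Laplace}, condition on $Z^D$ and invoke Lemma~\ref{lemma4} with $g_{t-s}=\phi(\cdot,u^{D,*}_f(\cdot,t-s))$, apply Lemma~\ref{lemma5}, and then verify the algebraic identity that collapses (\ref{int-W}) into (\ref{int4}). The paper's proof is terser---it cites \cite{BKMS} for the key identity rather than expanding it---but your more explicit term-by-term verification via $q+\chi=\psi'(\cdot,w)$ and your appeal to Lemma~\ref{int-equ} for uniqueness are exactly the content behind that citation.
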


\begin{proof} Thanks to Lemma \ref{lemma2}  it suffices to prove that
\begin{equation}
\mathbf{ E}_{(\mu,\nu)}(e^{\langle -f,I^D_t\rangle-\langle h,Z^D_t\rangle})=e^{-\langle v^D_{f,h}(\cdot,t),\nu\rangle},\notag
\end{equation}
where $I^D:=I^{D,\mathbb{N}^*}+I^{D,\mathbb{P}^*}+I^{D,\eta}$, and $v^D_{f,h}$ solves (\ref{int4}). Putting Lemma \ref{lemma4} and Lemma \ref{lemma5}  together we only need  to show that, when $g_{t-s}(\cdot)=\phi(\cdot,u_f^{D,*}(\cdot,t-s))$ (where $\phi$ is given by (\ref{els})), we have that $\exp\{-W(x,t)\}$ is the unique $[0,1]$-valued solution  to (\ref{int4}). Again following the lead of  \cite{BKMS}, in particular referring to Lemma 5 there, it is easy to see that on $D$
\begin{align}
H_{t-s}(\cdot,-w(\cdot)e^{-W(\cdot,t-s)})&-\phi(\cdot, u_f^{D,*}(\cdot,t-s))w(\cdot)e^{-W(\cdot,t-s)}-\frac{\psi(\cdot,w(\cdot))}{w(\cdot)}w(\cdot)e^{-W(\cdot,t-s)}\notag\\
&=\psi^*(\cdot,\-w(\cdot)e^{-W(\cdot,t-s)}+u_f^{D,*}(\cdot,t-s))-\psi^{*}(\cdot,u_f^{D,*}(\cdot,t-s)),\notag
\end{align}which implies that $\exp\{-W(x,t)\}$ is the unique $[0,1]$-valued  solution to (\ref{int4}).\qed
\end{proof}

{\bf Proof of Theorem  \ref{BBth}:}.
 The proof is guided by the calculation in the proof of Theorem 2 of \cite{BKMS}. We start by addressing the claim that $(\Delta^D, {\bf P}_\mu)$ is a Markov process. Given the Markov property of the pair $(\Delta^D, Z^D)$, it suffices to show that, given $\Delta^D_t$, the atomic measure $Z^D_t$ is equal in law to a Poisson random measure with intensity $w(x)\Delta^D_t$. Thanks to Campbell's formula for Poisson random measures, this is equivalent to showing that for all   $h\in{\rm bp}(D)$,
$${\bf E}_{\mu}(e^{-\langle h, Z^D_t\rangle}|\Delta^D_t)=e^{-\langle w\cdot(1- e^{-h}),\Delta^D_t\rangle},$$which in turn is equivalent to showing that for all $f, h\in {\rm bp}(D)$,
\begin{equation}\label{Laplace-Delta-Z2}{\bf E}_{\mu}(e^{-\langle f, \Delta^D_t\rangle-\langle h, Z^D_t\rangle})={\bf E}_{\mu}(e^{-\langle w\cdot(1- e^{-h})+f,\Delta^D_t\rangle}).\end{equation}
Note from \eqref{Laplace-Delta-Z} however that when we randomize $\nu$ so that it has the law of a Poisson random measure with intensity $w(x)\mu({\rm d}x)$, we find the identity
$${\bf E}_{\mu}(e^{-\langle f, \Delta^D_t\rangle-\langle h, Z^D_t\rangle})=\exp\left\langle -u^{D,*}_f(\cdot,t)-w\cdot(1-e^{-v^D_{f,h}(\cdot,t)}),\mu\right\rangle.$$
Moreover, if we replace $f$ by $w\cdot(1-e^{-h})+f$ and $h$ by $0$ in \eqref{Laplace-Delta-Z} and again randomize $\nu$ so that it has the law of a Poisson random measure with intensity $w(x)\mu({\rm d}x)$ then we get
$${\bf E}_{\mu}\left(e^{-\langle w\cdot(1- e^{-h})+f,\Delta^D_t\rangle}\right)=\exp{\left\langle -u^{D, *}_{w\cdot(1-e^{-h})+f}(\cdot, t)-w\cdot\left(1-\exp\left\{-v^D_{w\cdot(1-e^{-h})+f,0}\right\}\right),\mu\right\rangle}.$$
These last two observations indicate that (\ref{Laplace-Delta-Z2}) is equivalent to showing that, for all $f,h$ as stipulated above and $t\ge 0$,
\begin{equation}\label{eq-1}u^{D,*}_f(x,t)+w(x)(1-e^{-v^D_{f,h}(x,t)})=u^{D,*}_{w\cdot(1-e^{-h})+f}(x,t)+w(x)(1-e^{-v^D_{w\cdot(1-e^{-h})+f, 0}(x,t)}).\end{equation}
Note that both left and right-hand side of the equality above are necessarily non-negative
given that they are Laplace exponents of the left and right-hand sides of \eqref{Laplace-Delta-Z2}. Making use
of \eqref{int-D*}, \eqref{int-w-D}, and \eqref{int4}, it is computationally very straightforward to show that both left
and right-hand side of \eqref{eq-1} solve \eqref{int-D} with initial condition $f + w(1-e^{-h})$, which is bounded in $\overline D$. Since (1.2)
has a unique solution with this initial condition, namely $u^D_{f+w\cdot(1-e^{-h})}(x,t)$, we conclude that
\eqref{eq-1} holds true. The proof of the claimed Markov property is thus complete.

Having now established the Markov property, the proof is complete as soon as we can
show that $(\Delta^D, {\bf P}_\mu)$ has the same semi-group as $(X^D, \mathbb P_\mu)$. However, from the previous part of
the proof we have already established that when
$f,h\in {\rm bp}(D)$,
\begin{equation}
{\bf E}_\mu\left(e^{-\langle h, Z^D_t\rangle-\langle f, \Delta^D_t\rangle}\right)=e^{-\langle u^D_{w(1-e^{-h})+f}(\cdot,t),\mu\rangle}=\mathbb E_\mu\left(e^{-\langle f+w(1-e^{-h}), X^D_t\rangle}\right).
\label{poisson-laplace}
\end{equation}
In particular, choosing $h=0$ we find
$${\bf E}_\mu\left(e^{-\langle f,\Delta^D_t\rangle}\right)=\mathbb E_\mu\left(e^{-\langle f, X^D_t\rangle}\right),\qquad t\geq 0,$$
which is equivalent to saying that  the semi-groups of $(\Delta^D, {\bf P}_\mu)$ and $(X^D, \mathbb P_\mu)$ agree.
\qed

\section{Global backbone decomposition}\label{globalbb}

So far we have localized our computations to an open bounded domain $D$. Our ultimate objective is to provide a backbone decomposition on the whole domain $ E$. To this end,
let $D_n$ be a sequence of  open bounded domains in $ E$ such that $D_n\uparrow{ E}$. Let $X^{D_n}$, $\Delta^{D_n}$ and $Z^{D_n}$ be defined as in previous sections with $D$ being replaced by $D_n$.

\begin{lemma}\label{limit-Delta-Z}
For any
$h,f\in {\rm bp}( E)$ with compact support
 and any $\mu\in{\cal M}_F({ E})$, we have that for any $t\ge 0$,each element of the pair
 $\{(\langle h, Z^{D_n}_s\rangle, \langle f, \Delta^{D_n}_s\rangle): s\leq t\}$
 pathwise increases $\mathbf{P}_\mu$-almost surely as $n\to\infty$. The limiting pair of processes,
 here denoted by  $\{(\langle h, Z^{\rm min}_s\rangle, \langle f,\Delta^{\rm min}_s\rangle): s\leq t\}$,
 are  such that $\langle f, \Delta^{\rm min}_t\rangle$ is equal in law to $\langle f, X_t\rangle$ and,
 given $\Delta^{\rm min}_t$, the law of $Z^{\rm min}_t$ is a Poisson random field with intensity
 $w(x)\Delta^{\rm min}_t({\rm d}x)$. Moreover, $Z^{\rm min}$ is a $(\mathcal{P}^w, F)$ branching process
 with branching generator as in (\ref{gb})  and
associated motion semi-group given by that of the process $\xi$.
\end{lemma}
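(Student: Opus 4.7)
The plan is to obtain the pathwise monotonicity from the consistency property \eqref{consistent} and its analogue for $(\Delta^D, Z^D)$, and then identify the limit through the Laplace-functional identity \eqref{poisson-laplace}. Since $f$ and $h$ have compact support, pick a compact $K \subset E$ containing both supports, so that $K \subset\subset D_n$ for all $n$ large. First I would build the backbone on all of $E$ on a single probability space and then realise $Z^{D_n}$ as the sub-branching process consisting of those particles whose genealogical line of descent has not yet exited $D_n$; the analogue of \eqref{consistent} for $Z$ gives $Z^{D_n}_t \leq Z^{D_{n+1}}_t$ as measures on $D_n$, whence $\langle h, Z^{D_n}_t\rangle \uparrow \langle h, Z^{\mathrm{min}}_t\rangle$. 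For $\Delta^{D_n}$ one couples the three immigration mechanisms driven by the fixed backbone through Poisson point processes whose intensity measures depend only locally on the backbone trajectory, noting that the space-time integration domains $({b_u}, t\wedge {d_u}\wedge \tau^{D_n}_u]$ are monotone non-decreasing in $n$ and that each ``inner'' immigrated super-process $X^{(D_n,i,u,r)}$ can itself be obtained as the restriction of a single ambient super-process via \eqref{consistent}. This yields the required pathwise monotonicity of $\langle f, \Delta^{D_n}_t\rangle$ as well.

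With monotonicity established, apply \eqref{poisson-laplace} in $D_n$:
\[
\mathbf{E}_\mu\left(e^{-\langle h, Z^{D_n}_t\rangle-\langle f, \Delta^{D_n}_t\rangle}\right)=\mathbb{E}_\mu\left(e^{-\langle f+w(1-e^{-h}), X^{D_n}_t\rangle}\right).
\]
The integrand $f+w(1-e^{-h})$ is supported in $K$ and bounded there by Assumption (A). Dominated convergence on the left, using the monotone limit constructed in the first step, and on the right, using the standard fact $\langle g, X^{D_n}_t\rangle \uparrow \langle g, X_t\rangle$ for $g$ compactly supported (a consequence of \eqref{consistent} for the superprocess exit measures), gives
\[
\mathbf{E}_\mu\left(e^{-\langle h, Z^{\mathrm{min}}_t\rangle-\langle f, \Delta^{\mathrm{min}}_t\rangle}\right)=\mathbb{E}_\mu\left(e^{-\langle f+w(1-e^{-h}), X_t\rangle}\right).
\]
Setting $h\equiv 0$ yields $\Delta^{\mathrm{min}}_t\stackrel{d}{=} X_t$. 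Keeping both $f$ and $h$ and applying Campbell's formula to recognise the right-hand side as the joint Laplace functional of the pair $(N, X_t)$, where $N$ is, conditionally on $X_t$, a Poisson random measure of intensity $w(x)X_t({\rm d}x)$, identifies the conditional law of $Z^{\mathrm{min}}_t$ given $\Delta^{\mathrm{min}}_t$ as Poisson with intensity $w(x)\Delta^{\mathrm{min}}_t({\rm d}x)$.

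Finally, each $Z^{D_n}$ is a $(\mathcal{P}^w, F)$-Markov branching process with the additional killing of particles on exit from $D_n$. As $D_n\uparrow E$, this extra killing disappears, so the pathwise monotone limit $Z^{\mathrm{min}}$ is a $(\mathcal{P}^w, F)$ branching process whose motion semi-group is that of $\xi$ under $\{\Pi^w_x:x\in E\}$ without any domain restriction. The main obstacle I anticipate is making the coupled monotone construction of the full ensemble $(Z^{D_n}, X^{D_n,*}, I^{D_n,\mathbb{N}^*}, I^{D_n,\mathbb{P}^*}, I^{D_n,\eta})$ across $n$ genuinely pathwise, since the immigration rates are stopped functionals of the backbone trajectory and one must verify that the various Poisson superpositions glue together consistently as $D_n$ grows; once this is in place, the Laplace-transform identification and the branching-process identification are essentially routine.
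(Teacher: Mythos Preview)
Your outline is essentially the same strategy as the paper's proof: pathwise monotonicity from consistency, followed by passage to the limit in the Laplace-functional identity \eqref{poisson-laplace}. However, there is one genuine gap and one omission worth flagging.

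The gap is your appeal to ``the standard fact $\langle g, X^{D_n}_t\rangle \uparrow \langle g, X_t\rangle$ for $g$ compactly supported (a consequence of \eqref{consistent})''. Consistency gives you monotonicity of $\langle g, X^{D_n}_t\rangle$ in $n$, but it does \emph{not} by itself identify the limit as $\langle g, X_t\rangle$. The paper does not argue this pathwise at all; instead it works at the level of the semi-group equations and shows $u^{D_n}_g \uparrow u_g$ directly. The argument is: rewrite \eqref{int-Dn-1} via Lemma~1.5 of \cite{D1} so that the nonlinearity $\psi(\cdot,\lambda)+\alpha(\cdot)\lambda$ is monotone in $\lambda$, invoke the comparison principle (Theorem~3.2, part~II of \cite{D1}) to get $u^{D_n}_g$ increasing in $n$, pass to the limit by monotone convergence, and then use uniqueness for \eqref{u_f} to identify the limit with $u_g$. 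This is precisely why the paper imposes the uniform ellipticity and H\"older conditions on $\mathcal{P}$ (see the footnote in Section~1); without them the comparison principle is not available and this step is not routine. Your pathwise heuristic that ``every historical path eventually lies in some $D_n$'' is not easy to make rigorous for a superprocess.

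The omission is the treatment of general $\mu\in\mathcal{M}_F(E)$. The identity \eqref{poisson-laplace} is only available once ${\rm supp}\,\mu\subset D_n$, so the paper first assumes $\mu$ compactly supported, proves \eqref{poissonisation} in that case, and then extends to arbitrary $\mu$ by writing $\mu=\sum_{k\ge 1}\mu_k$ with the $\mu_k$ compactly supported and mutually disjoint, using additivity of the backbone construction in the initial state. You should include this step.
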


\begin{proof}
Appealing to the stochastic consistency of $Z^D$ and $\Delta^D$ in the domain $D$, it is clear that both $\langle h, Z^{D_n}_t, \rangle$ and $\langle f, \Delta^{D_n}_t \rangle$ are  almost surely increasing in $n$. It therefore follows that the limit as $n\to\infty$ exists for both
$\langle h, Z^{D_n}_t\rangle$ and $\langle f, \Delta^{D_n}_t\rangle$, $\mathbf{P}_\mu$-almost surely. In light of the discussion at the end of the proof of Theorem \ref{BBth}, the distributional properties of the limiting pair are established as soon as we show that
\begin{equation}
-\log \mathbf{ E}_{\mu}\left(e^{-\langle h, Z^{\rm min}_t\rangle - \langle f, \Delta^{\rm min}_t\rangle}\right)=\int_{ E}u_{w(1-e^{-h})+f}(x, t)\mu({\rm d}x),\quad t\ge 0.
\label{poissonisation}
\end{equation}

Assume temporarily  that $\mu$ has compact support so  that there exists an $n_0\in\mathbb{N}$ such that  for $n\geq n_0$ we have that $\mbox{supp}\mu\subset D_n$ and $h=f=0$ on $D^c_n$.
Thanks to (\ref{poisson-laplace}) and monotone convergence (\ref{poissonisation}) holds as soon as we can show  that $u_g^{D_n}\uparrow u_g$ for all $g\in{\rm bp}( E)$ satisfying $g=0$ on $D^c_n$ for $n\ge n_0$.
By \eqref{Laplace-D-r} and \eqref{int-D'}, we know that $u_g^{D_n}(x,t)$ is the unique non-negative solution to the integral equation
\begin{equation}\label{int-Dn-1}
u^{D_n}_g(x,t)=\Pi_x[g(\xi_{t\wedge\tau_{D_n}})] - \Pi_x\left[\int^{t\wedge\tau_{D_n}}_0\psi(\xi_s, u^{D_n}_g(\xi_s, t-s)){\rm d}s\right].
\end{equation}
Using Lemma 1.5 in \cite{D1} we can rewrite the above integral equation in the form
\begin{eqnarray}\label{int-Dn-2}
u^{D_n}_g(x,t)&=& \Pi_x\left[g(\xi_{t\wedge\tau_{D_n}})\exp\left(\int^{t\wedge\tau_{D_n}}_0\alpha(\xi_s){\rm d}s\right)\right]\notag\\
&&\hspace{-2.5cm}-\Pi_x\left[\int^{t\wedge\tau_{D_n}}_0\exp\left(\int^{s}_0\alpha(\xi_r){\rm d}r\right)\left[\psi(\xi_s, u^{D_n}_g(\xi_s, t-s))+\alpha(\xi_s) u^{D_n}_g(\xi_s, t-s))\right]{\rm d}s\right].
\end{eqnarray}
Since $g=0$ on $D^c_n $ for $n\ge n_0$, we have
$$ \Pi_x\left[g(\xi_{t\wedge\tau_{D_n}})\exp\left(\int^{t\wedge\tau_{D_n}}_0\alpha(\xi_s){\rm d}s\right)\right]
= \Pi_x\left[g(\xi_{t})\exp\left(\int^{t}_0\alpha(\xi_s){\rm d}s\right);t<\tau_{D_n}\right],$$
which is increasing in $n$. By the comparison principle, $u^{D_n}_g$ is increasing in $n$ (see Theorem 3.2 in part II of  \cite{D1}).
Put $\tilde u_g=\lim_{n\to\infty} u^{D_n}_g$.  Note that $\psi(x,\lambda)+\alpha(x)\lambda$ is
increasing in $\lambda$. Letting $n\to\infty$ in \eqref{int-Dn-2}, by the monotone convergence theorem,
$$
 \tilde u_g(x,t)=\displaystyle \mathcal{P}^\alpha_sg(x)- \Pi_x\int^{t}_0\mathcal{P}^\alpha_s\left[\psi(\cdot, \tilde u_g(\cdot, t-s))+\alpha(\cdot)\tilde u_g(\cdot, t-s))\right]{\rm d}s,
 $$
where
$$
\mathcal{P}^\alpha_tf=\Pi_x\left[g(\xi_{t})\exp\left(\int^{t}_0\alpha(\xi_s){\rm d}s\right)\right],\quad g\in {\rm bp}( E),
$$
which in turn is equivalent to
$$ \tilde u_g(x,t)= \mathcal{P}_sg(x)-\Pi_x\int^{t}_0\mathcal{P}_s\psi(\cdot, \tilde u_g(\cdot, t-s)){\rm d}s.$$
Therefore, $\tilde u_g$ and $u_g$ are two solutions of \eqref{u_f} and hence by uniqueness they are the same, as required.

To remove the initial assumption that $\mu$ is compactly supported, suppose that $\mu_n$ is a sequence of compactly supported measures with mutually disjoint support  such that $\mu = \sum_{k\geq 1}\mu_k$. By considering (\ref{poissonisation}) for $\sum_{k = 1}^n\mu_k$  and taking limits as $n\uparrow\infty$ we  see that (\ref{poissonisation})  holds for $\mu$. Note in particular that the limit on the left hand side of (\ref{poissonisation}) holds as a result of the additive property of the backbone decomposition in the initial state $\mu$. \qed
\end{proof}

Note that, in the style of the proof given above (appealing to monotonicity and the maximality principle) we can easily show that the processes $X^{D_n,*}$, $n\geq 1$, converge distributionally at fixed times, and hence in law, to the process $(X,\mathbb{P}^*_\mu)$; that is, a $(\mathcal{P}, \psi^*)$-superprocess.
With this in mind, again appealing to the consistency and monotonicity of the local backbone decomposition in the size of domain the following, our main result, follows
as a simple corollary of Lemma \ref{limit-Delta-Z}.
\begin{cor}
 Suppose that  $\mu\in\mathcal{M}_F( E)$.
  Let $Z$ be a $(\mathcal{P}^w, F)$-Markov branching process with initial configuration consisting of a Poisson random field of particles in $ E$ with intensity $w(x)\mu({\rm d}x)$. Let $X^{*}$ be an independent copy of $(X,\mathbb{P}^*_\mu)$. Then define the measure valued stochastic process $\Delta  =\{\Delta_t: t\geq 0\}$
such that, for $t\geq 0$,
\begin{equation}\label{Lambda}
\Delta_t =X^{*}_t  +  I^{\mathbb{N}^*}_t +  I^{\mathbb{P}^*}_t+I^{\eta}_t,
\end{equation}
where $I^{\mathbb{N}^*} =\{I^{\mathbb{N}^*}_t :t\geq 0\}$, $I^{\mathbb{P}^*} =\{I^{\mathbb{P}^*}_t : t\geq 0\}$ and $I^{\eta}=\{I^{\eta}_t: t\geq 0\}$ are defined as follows.
\begin{itemize}
\item[i)]({\bf Continuum immigration:})  The process $I^{\mathbb{N}^*}$ is measure-valued on
$ E$
such that
    $$I^{\mathbb{N}^*}_t=\sum_{u\in{\cal T}}\sum_{{b_u}< r\le t\wedge{d_u}}X_{t-r}^{(1,u,r)},$$
   where, given $Z$, independently for each $u\in{\cal T}$ such that ${b_u}<t$, the processes $X^{(1,u,r)}$  are independent copies of the canonical process $X$, immigrated along the space-time trajectory $\{(z_u(r), r): r\in({b_u}, t\wedge{d_u}]\}$ with rate
   \[
   {\rm d}r\times 2\beta(z_u(r)){\rm d}\mathbb{N}^*_{z_u(r)}.
   \]

\item[ii)] ({\bf Discontinuous immigration:}) The process $I^{ \mathbb{P}^*}$ is measure-valued on
    $ E$ such that
   $$I^{\mathbb{P}^*}_t=\sum_{u\in{\cal T}}\sum_{{b_u}< r\le t\wedge{d_u}}X^{(2,u,r)},$$
   where, given $Z$, independently for each $u\in{\cal T}$ such that ${b_u}<t$, the processes $X^{( 2, u,r)}$ are independent copies of the canonical process $X$, immigrated along the space-time trajectory $\{(z_u(r), r): r\in({b_u}, t\wedge{d_u}]\}$ with rate
   \[
   {\rm d}r\times\int_{y\in(0,\infty)}ye^{-w(z_u(r))y}\pi(z_u(r), {\rm d}y)\times {\rm d}\mathbb{P}^*_{y\delta_{z_u(r)}}.
   \]

\item[iii)]({\bf Branch point biased immigration:}) The process $I^{\eta}$ is measure-valued on
$ E$
such that
$$I^{\eta}_t=\sum_{u\in{\cal T}^D}\mathbf{1}_{\{{d_u}\le t\}}X^{(3,u)}_{t-{d_u}},$$
where, given $Z$, independently for each $u\in{\cal T}$ such that ${d_u}<t$, the processes $X^{(3, u)}$   are independent copies of the canonical process $X$ issued at time ${d_u}$ with law $\mathbb{P}^*_{Y_u\delta_{z_u({d_u})}}$ such that, given $u$ has $n\geq 2$ offspring, the independent random variable $Y_u$ has distribution $\eta_n(z_u(r), {\rm d}y)$, where $\eta_n(x, {\rm d}y)$ is defined by (\ref{il}).

\end{itemize}

\noindent Then $(\Delta, {\bf P}_\mu)$ is Markovian and has the same law as $(X, \mathbb{P}_{\mu})$.

\end{cor}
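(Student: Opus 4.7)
The plan is to realize the global process $(\Delta,{\bf P}_\mu)$ as the pathwise monotone limit of the local backbone processes $(\Delta^{D_n},{\bf P}_\mu)$ already constructed for Theorem \ref{BBth}, and then to harvest Lemma \ref{limit-Delta-Z}. First I would build everything on a single probability space: take $Z$ to be the $(\mathcal{P}^w, F)$-branching process of the statement (with its Poisson initial field of intensity $w(x)\mu({\rm d}x)$ on $E$), let $X^{*}$ be an independent $(\mathcal{P},\psi^{*})$-superprocess with initial measure $\mu$, and drive the three types of immigration by Poisson point processes attached to the genealogy $\mathcal{T}$ of $Z$. The exit-restricted objects $Z^{D_n}$ and $X^{D_n,*}$ are then naturally nested via the consistency property (\ref{consistent}), and $I^{D_n,\mathbb{N}^{*}}$, $I^{D_n,\mathbb{P}^{*}}$, $I^{D_n,\eta}$ arise by truncating each driving Poisson point process to $u\in \mathcal{T}^{D_n}$ and to times before $\tau^{D_n}_u$.

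Since $\mathcal{T}^{D_n}\uparrow\mathcal{T}$ and $\tau^{D_n}_u\uparrow\infty$ for every fixed $u$, and the immigrated processes $X^{(D_n,\cdot,u,r)}$ increase to their unrestricted counterparts $X^{(\cdot,u,r)}$, a term-by-term monotone comparison yields $I^{D_n,\mathbb{N}^{*}}_t\uparrow I^{\mathbb{N}^{*}}_t$, $I^{D_n,\mathbb{P}^{*}}_t\uparrow I^{\mathbb{P}^{*}}_t$, $I^{D_n,\eta}_t\uparrow I^{\eta}_t$ and $X^{D_n,*}_t\uparrow X^{*}_t$, all pathwise ${\bf P}_\mu$-almost surely. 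Summing the four pieces, the pathwise identification $\Delta^{D_n}_t\uparrow \Delta_t$ holds with $\Delta_t$ exactly the object defined in the corollary. Lemma \ref{limit-Delta-Z} then immediately yields that $\Delta_t$ has the same one-dimensional law as $X_t$ under $\mathbb{P}_\mu$, and that conditionally on $\Delta_t$ the atomic measure $Z_t$ is a Poisson random field on $E$ with intensity $w(x)\Delta_t({\rm d}x)$.

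To upgrade this to equality of laws as processes together with the Markov property, I would replay the Laplace-functional argument of the proof of Theorem \ref{BBth}, now on $E$ rather than $D$. The Poisson coupling of $Z_t$ given $\Delta_t$ just established reduces the Markov property to the identity
\[
u^{*}_f(x,t) + w(x)\bigl(1-e^{-v_{f,h}(x,t)}\bigr) = u^{*}_{w(1-e^{-h})+f}(x,t) + w(x)\bigl(1-e^{-v_{w(1-e^{-h})+f,\,0}(x,t)}\bigr),
\]
where $v_{f,h}$ is the global analogue of the function defined by (\ref{int4}). Both sides solve the semi-group equation (\ref{u_f}) with the same initial condition $f+w(1-e^{-h})$, so the uniqueness statement for (\ref{u_f}) forces equality. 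The main obstacle is to justify this limit cleanly: showing $v^{D_n}_{f,h}\to v_{f,h}$ and obtaining the global integral equation for $v_{f,h}$ with its uniqueness surviving the passage $D_n\uparrow E$. I would treat first compactly supported $f,h\in {\rm bp}(E)$ (so that $f=h=0$ on $D_n^c$ eventually) and combine the monotone convergence $u^{D_n,*}_f\uparrow u^{*}_f$ already appearing inside the proof of Lemma \ref{limit-Delta-Z} with the Lipschitz bound of Lemma \ref{int-equ}, before removing the compact-support restriction by a standard approximation.
\qed
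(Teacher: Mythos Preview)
Your proposal is correct and follows the same route as the paper, which treats the corollary as an immediate consequence of Lemma \ref{limit-Delta-Z} via consistency and pathwise monotone limits of the local decompositions. One simplification: you need not pass the integral equation for $v_{f,h}$ to $E$ or worry about its global uniqueness, since Lemma \ref{limit-Delta-Z} already delivers the full joint Laplace identity $-\log{\bf E}_\mu\bigl(e^{-\langle h,Z_t\rangle-\langle f,\Delta_t\rangle}\bigr)=\langle u_{w(1-e^{-h})+f}(\cdot,t),\mu\rangle$ directly (through the convergence $u^{D_n}_g\uparrow u_g$), and from this both the Poisson conditioning of $Z_t$ given $\Delta_t$ (hence the Markov property of $\Delta$) and the semi-group identification (set $h=0$) follow at once.
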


\section*{Acknowledgements}
We would like to thank Maren Eckhoff for a number of helpful comments on earlier versions of this paper. Part of this research was carried out whilst AEK was on sabbatical at ETH Z\"urich, hosted by the Forschungsinstitute f\"ur Mathematik, for whose hospitality he is grateful. The research of YXR is supported in part by the NNSF of China (Grant Nos. 11271030 and 11128101).

\end{document}